\theoremstyle{plain}
\newtheorem{theorem}{Theorem}[section]
\newtheorem{proposition}[theorem]{Proposition}
\newtheorem{lemma}[theorem]{Lemma}
\newtheorem{remark}[theorem]{Remark}
\newcommand{\A}{\mathbb A}
\newcommand{\Af}{\A_f}
\newcommand{\Ann}{\mathrm{Ann}}
\newcommand{\C}{\mathbb C}
\newcommand{\Co}{\mathrm C}
\newcommand{\D}{\mathcal D}
\newcommand{\Da}{D^{\alpha}}
\newcommand{\Dom}{\mathrm {Dom}}
\newcommand{\B}{\mathcal B}
\newcommand{\F}{\mathcal F}
\newcommand{\T}{\mathcal T}
\newcommand{\lcm}{\mathrm{lcm}}
\newcommand{\N}{\mathbb N}
\newcommand{\Pp}{\mathbb P}
\newcommand{\Q}{\mathbb Q}
\newcommand{\Qp}{\Q_p}
\newcommand{\Qs}{\Q_S}
\newcommand{\R}{\mathbb R}
\newcommand{\Real}{\mathrm{Re}}
\newcommand{\Ss}{\mathbf{S} }
\newcommand{\Tt}{\mathbf{T} }
\newcommand{\Z}{\mathbb Z}
\newcommand{\Zp}{\Z_p}
\newcommand{\Zs}{\Z_S}
\newcommand{\Zprod}{\prod_{p\in S} \Z_p}
\newcommand{\Zz}{\widehat{\Z}}
\newcommand{\abs}[1]{\left\vert#1\right\vert}
\newcommand{\absp}[1]{\left\vert#1\right\vert_p}
\newcommand{\mTo}{\longmapsto}
\newcommand{\norm}[1]{\left\Vert#1\right\Vert}
\newcommand{\ord}{\mathrm{ord}}
\newcommand{\To}{\longrightarrow}
\date{\today}
\begin{document}

\title[A Heat Equation  on some Adic Completions of $\Q$ and Ultrametric Analysis]{A Heat Equation on  Adic Completions of $\Q$ and Ultrametric Analysis}

\author[V.A. Aguilar--Arteaga, M. Cruz--L\'opez and S. Estala--Arias]{Victor A. Aguilar--Arteaga$^*$, Manuel Cruz--L\'opez$^{**}$ and Samuel Estala--Arias$^{***}$}

\address{$^{*}$ Departamento de Matem\'aticas, CINVESTAV, Unidad Quer\'etaro, M\'exico} 
\email{aguilarav@math.cinvestav.mx}

\address{$^{**}$ Departamento de Matem\'aticas, Universidad de Guanajuato,
Jalisco S/N Mineral de Valenciana, Guanajuato, Gto.\ C.P.\ 36240, M\'exico} 
\email{manuelcl@ugto.mx}

\address{$^{***}$ Ciencias de la Computaci\'on, INAOE, M\'exico} 
\email{samuelea@ccc.inaoep.mx}

\subjclass[2010]{Primary: 35Kxx, 60Jxx Secondary: 35K05, 35K08, 60J25}

\keywords{adic completions, ultrametrics, pseudodifferential equations, heat kernels, Markov processes.}

\begin{abstract}  The product ring $\Qs=\prod_{p\in S} \Qp$, where   $S$ is a finite set of prime numbers and $\Qp$ is the field of $p$--adic numbers, is a second countable, locally compact and totally disconnected topological ring. This work introduces  a natural  ultrametric on $\Qs$ that allows to define a pseudodifferential operator $\Da$ 
 and to study an abstract heat equation on the Hilbert space $L^2(\Qs)$.  
 The fundamental solution of this equation is a normal transition function of a Markov process on $\Qs$. As a result, the techniques developed here  provides a general framework of these problems on other related
ultrametric groups.
\end{abstract}

\maketitle

\section*{Introduction}
\label{introduction}

The theory of pseudodifferential operators over the field $\Qp$ of $p$--adic numbers has been deeply explored by several authors. Many  classical equations have been formulated on the $n$-dimensional version of this non--Archimedean space and their solutions have been relevant to some models of modern physics (see e.g., \cite{VVZ}, \cite{Koc}  and \cite{Zun1} and the references therein). In particular, several analogous heat equations on $\Qp^n$ are now well understood: the fundamental solutions of these equations give rise to  transition functions of  Markov processes on $\Qp^n$, which are  non--Archimedean counterparts to the classical Brownian motion. 

Pursuing a generalization, numerous Markov stochastic processes have been defined and elaborated on several different ultrametric and algebraic spaces, such as  the finite ad\`ele ring $\Af$, and on   general locally compact topological groups, such as the complete ad\`ele ring $\A$, as documented in \cite{CE1}, \cite{CE2}, \cite{BGPW}, \cite{VVZ}, \cite{Koc}, \cite{KKS}, \cite{KR}, \cite{TZ} and \cite{Zun}, among others. 

This article deals with  a  Markov process related to the fundamental solution  of a heat  equation  on the direct product ring $\Qs=\prod_{p\in S} \Qp$, where $S$ is a fixed finite set of finite prime numbers. 

The techniques developed here are different from the well known ones: they are geometrical and very simple. In addition, this method apply in the case of a selfdual, second countable, locally compact, Abelian, topological group $G$ with a selfdual filtration $\{H_n\}_{n \in \Z}$ by compact and open subgroups, such that the indices $[H_n:H_{n-1}]$ are uniformly bounded. The authors show how to construct  many examples  of these groups.  (See Section \ref{final_remarks} for details and also \cite{BGPW} for other general setting and techniques.)

The ring $\Qs$ is a second countable, locally compact, totally disconnected,  commutative, topological ring and contains, as a maximal compact and open subring, the direct product ring $\Zs=\prod_{p\in S} \Zp$, where $\Zp$ is the ring of $p$--adic integers. Furthermore, as a topological group, it has a Haar measure $d\mu$ normalised to be a probability measure on $\Zs$,
it is selfdual in the sense of Pontryagin and there exists an additive character $\chi(\cdot)$ such that $x \mapsto \chi(x \cdot)$ gives an explicit isomorphism. Using this identification, the  subgroup $\Zs$ coincides with its own annihilator.

Let $\psi(n)$ be the second Chebyshev function (see Section \ref{ultrametrica}). This function determines a ``symmetric''  filtration of $\Qs$ by open and closed subgroups: 
$$
e^{\psi(m)}\Zs \subset e^{\psi(n)} \Zs \subset \Zs \subset e^{\psi(-n)}\Zs \subset e^{\psi(-m)}\Zs   \quad (m  \geq n \geq 1).
$$
There is a unique additive invariant ultrametric $d$ on $\Qs$ such that it  has the filtration above as the set of balls centred at zero and the Haar measure of any ball is equal to its radius. 

The ultrametric $d$ leads to define a pseudodifferential operator $\Da$ on the Hilbert space $ L^2(\Qs)$, which is similar to the Taibleson operator on $\Q_p^n$ but  looks also as the Vladimirov operator on $\Qp$. The operator $-\Da$ is a positive selfadjoint unbounded operator and allows to state the abstract Cauchy problem on $L^2(\Qs)$ for the classical heat equation. This problem is well posed and   the normalization  property of the ultrametric $d$ allows to give classical bounds of the heat kernel $Z(x,t)$, using  properties of the Archimedean Gamma function, and to find and explicit solution of this problem.

The main result on the solution of the Cauchy problem reads as follows (see Section \ref{heat_equation} for details and also Theorem \ref{generalization}).

\textbf{Theorem \ref{maintheorem}:} If $f$ is a complex valued square integrable function on $\Qs,$ which belongs to the domain of $-\Da$, the Cauchy problem
\begin{equation*}
\begin{cases}
\frac{{\partial}u(x,t)}{{\partial t}} + \Da u(x,t) = 0,  \ x \in \Qs, \ t \geq 0,  \\
u(x,0) = f(x),
\end{cases}
\end{equation*} 
has a classical solution $u(x,t)$ determined by the convolution of $f$ with the heat kernel $Z(x,t)$. In addition, $Z(x,t)$ is the transition density of a time and space homogeneous Markov process which is bounded, right--continuous and has no discontinuities other than jumps.

The exposition is organized as follows. In Section \ref{p-adic_numbers} the results of the theories of $p$--adic numbers and Fourier analysis on $\Qp$ are collected. The description of the ring $\Qs$ and several important properties related to the non--Archimedean metric $d$ are considered in Section \ref{Sadic_ring}. Section \ref{function_spaces} presents the theory of Fourier analysis and
introduces the pseudodifferential operator $\Da$ defined on $\Qs$ and in Section \ref{heat_equation} the homogeneous heat equation is treated. Section \ref{markov_process} provides the description of the Markov process derived from the fundamental solution to the heat equation. The general Cauchy problem on $\Qs$ is presented in Section \ref{Cauchy_problem}, and finally, in Section \ref{final_remarks} different aspects on generalization of the theory presented here are analysed. 

\section[The field of $p$--adic numbers]{The field of $p$--adic numbers}
\label{p-adic_numbers}

This section outlines the basic elements of the theory of $p$--adic numbers $\Qp$ as well as  the relevant function spaces defined on $\Qp$ and some aspects of the Fourier analysis on this space. For a comprehensive introduction to these subjects we quote \cite{VVZ}. 

Let $\N=\{ 1,2, \ldots \}$ be the set of natural numbers and let $\Pp$ be the set of prime numbers.  Fix a prime number $p\in \Pp$. If $x$ is any nonzero rational number, it can be written uniquely as $x=\displaystyle{p^k\frac{a}{b}}$, with $p$ not dividing the product $ab$ and $k\in \Z$. The function
\[ 
\absp{x} := 
\begin{cases}             
p^{-k}    & \text{ if } x \neq 0 , \\
0 & \text{ otherwise,}
\end{cases}
\] 
is  a non--Archimedean absolute value on $\Q$. The \textsf{field of $p$--adic numbers} $\Qp$ is defined as the completion of $\Q$ with respect to the distance induced by $\absp{\cdot}$. 

Any nonzero $p$--adic number $x$ has a unique representation of the form
\[ p^{\gamma}\sum_{i=0}^{\infty}a_ip^i, \] 
where $\gamma=\gamma(x) \in \Z, \, a_i \in \{0,1,\ldots,p-1\}$  and $a_0 \neq 0$. The value $\gamma$, with $\gamma(0)=+\infty$, is called the \textsf{$p$--adic order of $x$}. Any series of the above form converges in the topology induced by the $p$--adic metric.

The fractional part of a $p$--adic number $x$ is defined by
$$
\{x\}_p = 
\begin{cases}
p^{\gamma} \sum_{i=0}^{-\gamma-1} a_i p^i & \text{ if } \gamma <0, \\
0 & \text { if } \gamma \geq 0.       
\end{cases}
$$

The field $\Q_p$ is a locally compact topological field. The unit ball 
\[ \Zp = \{\, x \in  \Q_p \, : \, \abs{x}_p \leq 1 \,\} \] 
is the \textsf{ring of integers} and  the maximal compact and open subring of $\Qp$. Denote by $dx$ the Haar measure of the topological Abelian group $(\Qp,+)$ normalized to be a probability measure on $\Zp$.

 The algebraic and  topological properties of the ring $\Zp$ and the field $\Qp$ can be  expressed, respectively, by an inductive   and a projective limit
\begin{equation} \label{projetive-inductive}
\Zp =\varprojlim _{l\in \N \cup \{0\}}  \Zp/ p^{l} \Zp, \qquad  \Qp = \varinjlim_{l\in \N \cup \{0\}} p^{-l} \Zp. 
\end{equation} 

 In addition to the limits above, to the ring $\Zp$ and the field $\Qp$, there corresponds, respectively, an infinite rooted  tree $\mathcal{T}(\Zp)$ with constant  ramification index $p$, and an extended tree $\mathcal{T}(\Qp)$ with constant ramification index $p$. The endpoints of these trees corresponds, respectively, to  $\Zp$ and $\Qp$.

A function $\phi:\Qp\To \C$ is  \textsf{locally constant} if for any $x\in \Qp$, there exists an integer $\ell(x)\in \Z$ such that
\[ \phi(x+y) = \phi(x), \quad \text{for all}\; y\in B_{\ell(x)}, \]
where $B_{\ell(x)}$ is the closed ball with centre at zero and radius $p^{\ell(x)}$.

The set of all locally constant functions of compact support on $\Qp$ forms a $\C$--vector space denoted by $\D(\Qp)$. The $\C$--vector space $\D(\Qp)$ is the \textsf{Bruhat--Schwartz space} of $\Qp$ and an element 
$\phi\in \D(\Qp)$ is called a \textsf{Bruhat--Schwartz function} (or simply a \textsf{test function}) on $\Qp$.

If $\phi$ belongs to $\D(\Qp)$ and its not zero everywhere, there exists a largest $\ell=\ell(\phi)\in \Z$ such that, for any $x\in \Qp$, the following equality holds
\[ \phi(x+y)=\phi(x), \text{ for all } y \in B_{\ell}. \] 
This number $\ell$ is called \textsf{the parameter of constancy} of $\phi$. 

Denote by $\D_k^{\ell}(\Qp)$ the finite dimensional vector space consisting of functions with  parameter of constancy is greater or equal than $\ell$ and whose  support is contained in $B_k$. 

A sequence $(f_m)_{m\geq 1}$ in $\D(\Qp)$ is a Cauchy sequence if there exist $k,\ell \in \Z$ and $M>0$ such that 
$f_m\in \D_k^\ell(\Qp)$ if $m\geq M$ and $(f_m)_{m\geq M}$ is a Cauchy sequence in $\D_k^\ell(\Qp)$. 	That is, 
$$
\D^{\ell}(\Qp) = \varinjlim_{k} \D^{\ell}_k(\Qp) \quad \text{ and } \quad  
\D(\Qp) =\varinjlim_{\ell} \D^{\ell}(\Qp).
$$

With this topology the space $\D(\Qp)$ is a complete locally convex topological algebra over $\C$. It is also a nuclear space because
$\D^{\ell}(\Qp)$  is the  inductive  limit of countable family of finite dimensional algebras and
$\D(\Qp)$ is the inductive  limit of countable family of nuclear spaces $\D^{\ell}(\Qp)$.

For each compact set $K\subset\Qp$, let $\D(K)\subset \D(\Qp)$ be the subspace of test functions whose support is contained in $K$. The space $\D(K)$ is dense in $\Co(K)$, the space of complex--valued continuous functions on $K$.

An additive character of the field $\Qp$ is defined as a continuous function $\chi:\Qp\To \C$ such that $\chi(x+y)=\chi(x)\chi(y)$ and $\abs{\chi(x)}=1$, for all $x,y \in \Qp$.  The function $\chi_p(x)=\exp(2\pi i\{x\}_p)$ defines a canonical additive character of $\Qp$ which is trivial on $\Zp$ and not trivial outside $\Zp$. In fact, all characters of $\Qp$ are given by $\chi_{p,\xi}(x)=\chi_p(\xi x)$ with $\xi \in \Qp$.

The Fourier transform of a test function $\phi \in \D(\Qp)$ is given by the formula
$$
\F_p\phi(\xi) = \widehat{\phi}(\xi) = 
\int_{\Qp} \phi(x) \chi_p(\xi x)dx, \qquad  (\xi \in \Qp). 
$$ 

The Fourier transform is a continuous linear isomorphism of the space $\D(\Qp)$ onto itself and the inversion formula holds:
\[ \phi(x) = \int_{\Qp} \widehat{\phi}(\xi) \chi_p(-x\xi) d\xi \qquad (\phi\in \D(\Qp)). \]

The Parseval -- Steklov equality reads as: 
$$
\int_{\Qp} \phi(x) \overline{\psi(x)} dx =\int_{\Qp} \widehat{\phi}(\xi) \overline{\widehat{\psi}(\xi)} d\xi,
\qquad (\phi,\psi \in \D(\Qp)).
$$

\begin{remark} From  expressions (\ref{projetive-inductive}) or the description of $\Qp$ as collection of  endpoints of the tree $\T(\Qp)$, it follows that the Hilbert space $L^2(\Qp)$ has a numerable Hilbert base, which is an analogous of a wavelet base, and therefore it is a  separable Hilbert space (see e.g. \cite{AKS}). 

\end{remark}

\begin{remark} The extended Fourier transform $\F:L^2(\Qp) \To L^2(\Qp)$ is an isometry of Hilbert spaces and the Parseval -- Steklov identity holds on $L^2(\Qp)$.
\end{remark}

\section[The $S$--adic ring of $\Q$]{The $S$--adic ring of $\Q$}
\label{Sadic_ring}

This section introduces the ring $\Qs$ and an ultrametric on $\Qs$ invariant under translations by elements of $\Qs$ and under multiplication by units of $\Zs$. We will describe $\Qs$ as  topological ring itself, however other  proofs of our statements can be given by looking at the product structure of $\Qs$  (see \cite{CE1} for these kind of construction, where it is done for the finite ad\`ele ring of $\Q$).

\subsection{The adic ring $\Qs$}
Fix a finite subset $S\subset \Pp$ and define $\Qs$ as the \textsf{direct product} 
\[ \Qs =   \prod_{p\in S}  \Qp. \] The Tychonoff topology and the componentwise operations provide $\Qs$ with a structure of a topological ring. The topological ring $\Qs$ is commutative, second countable, locally compact and totally disconnected. The maximal compact and open subring of $\Qs$ is the direct product ring $\Zs = \Zprod$.

The additive group $(\Qs, +)$ is a locally compact Abelian group and therefore it has a Haar measure $d\mu$  which can be normalized to be a probability measure on $\Zs$. The measure $d\mu$ can be expressed in terms of the measures $dx_p$ on the groups $(\Qp,+)$ as the direct product measure
\[ d\mu = \prod_{p\in S} dx_p. \] 

With regard to characters, there is a  canonical additive character $\chi$ on $\Qs$, which is trivial on $\Zs$ and not trivial outside $\Zs$, given by 
\[ \chi(x) = \prod_{p\in S} \chi_p(x_p), \qquad \big(x= (x_p)_{p\in S} \in \Qs \big), \]
where $\chi_p(x_p)$ is the canonical character of $\Qp$. Recall that these characters are given by $\chi_p(x_p)=e^{2\pi i\{x_p\}_p}$, where $\{x_p\}_p$ is the $p$--adic fractional part of $x_p$.

For $\xi\in \Qs$, the application
\[ \chi_{\xi}(x) = \chi(\xi \cdot x) = \prod_{p\in S} \chi_p(\xi_p x_p), \qquad 
\big(\xi= (\xi_p)_{p\in S} \in \Qs \big), \] 
defines a character on $\Qs$. Moreover, since $\Qs$ is a direct product of some $p$-adic fields, any arbitrary character on $\Qs$ has the form
$\chi_{\xi}$, for some $\xi\in \Qs$. Therefore $\Qs$ is a selfdual group with isomorfism given by $\xi \mapsto \chi_{\xi}$.

Recall that the annihilator of a compact and open subgroup $H$ of $\Qs$ is the set of characters that are trivial in $H$. From the expression of the characters on $\Qs$,  the relation $$\Ann_{\Qs}(B_n) = B_{-n},$$ where $\Ann_{\Qs}(B_n)$ is the annihilator of $B_n$ in $\Qs$, holds.  In particular, $\Zs$ coincides with its own annihilator.

\begin{remark}
For the scope of this work the relevant properties of  $\Qs$ are that it a topological ring which is second countable, locally compact, and totally disconnected. In addition, as a topological group, $\Qs$ is  selfdual. 
\end{remark}

\subsection{An ultrametric on $\Qs$}\label{ultrametrica}
Let us introduce an ultrametric $d$ on $\Qs$ compatible with its topology, making $(\Qs,d)$ a complete ultrametric space. The set of rational numbers $\Q$ is diagonally embedded in $\Qs$ with dense image. In this sense, the ring $\Qs$ can be thought of as an $S$--completion of $\Q$ (see \cite{Bro} for a description of this topology  on the ring of integer numbers $\Z$). 

We start by defining two arithmetical functions which are related to the set $S$ and similar to the second Chebyshev and von Mangoldt functions (see e.g., \cite{Apo}). For any  natural number $n$, write 
$$
\Lambda(n) = \Lambda_S(n) =
\begin{cases} 
\log p & \text{if}\; n=p^k \text{ for some } p\in S \text{ and integer } k\geq 1, \\ 
0 & \text{otherwise}. 
\end{cases} 
$$ 
Likewise, let $\psi(n)$ denote the function implicitly given by  
$$ 
e^{\psi(n)} = e^{\psi_S(n)}= \lcm \left\{\, p^l \leq n : p \in S , \, l \in \N \cup \{0\}  \,\right\} 
\qquad(n \in \N). 
$$

These arithmetical functions are related by the equations
$$
\psi(n) =\sum_{k=1}^{n} \Lambda(k) \quad \text{ and } \quad 
e^{\psi(n)} = \prod_{k=1}^{n} e^{\Lambda(k)} \qquad (n \in \N).
$$ 

For any integer number $n$, define
\begin{equation*}
\psi(n) =
\begin{cases} 
\frac{n}{\abs{n}} \psi(\abs{n}) & \text{ if } n\neq 0, \\
0  & \text{ if } n=0, 
\end{cases}
\end{equation*}
and 
\begin{equation*}
\Lambda(n)=
\begin{cases} 
\Lambda(n) & \text{ if } n > 0, \\
\Lambda(\abs{n-1})=\Lambda(\abs{n}+1)  & \text{ if } n\leq 0. 
\end{cases}
\end{equation*}

The relations between these functions on the natural numbers extend to the integers in the following way: for any integers $n > m$
$$
\psi(n) - \psi(m) = 
\sum_{k=m+1}^{ n} \Lambda(k) \quad \text{ and } \quad e^{\psi(n)}/e^{\psi(m)} 
= \prod_{k =m+1}^n e^{\Lambda(k)}. 
$$ 

The collection $\{e^{\psi(n)}\Zs \}_{n\in \Z}$ of compact and open subgroups is a neighbourhood base of zero for the Tychonoff topology on $\Qs$ and determines a filtration (see figure 1) 
\[ \{0\}\subset \cdots \subset e^{\psi(n)}\Zs \subset \cdots \subset \Zs \subset \cdots \subset e^{\psi(m)}\Zs \subset \cdots \subset \Qs \qquad \big( n > 0 > m \big), \]
satisfying the properties:
\begin{equation*}
\label{union-interseccionQl}
\bigcap_{n\in \Z} e^{\psi(n)}\Zs = \{0\}\quad \text{ and } \quad 
\bigcup_{n\in \Z} e^{\psi(n)}\Zs = \Qs.
\end{equation*}

\begin{remark} From the properties of the above filtration, the topology of the ring $\Qs$ is expressed by the inductive and projective limits
\[ \Qs = \varinjlim_{n\in \N} e^{\psi(-n)} \Zs, \quad \Zs =\varprojlim _{n\in \N} \Zs/ e^{\psi(n)} \Zs.   \] 
In addition, we have the identity
$$
e^{\psi(n)}\Zs= \prod_{p\in S}p^{\ord_p(e^{\psi(n)})}\Z_p,
$$ where $\ord_p(\cdot)$ is the $p$--adic order function on $\Qp$.
\end{remark}

\begin{figure}\label{filtrationofQs}
\begin{center}
\newrgbcolor{cccccc}{0.8 0.8 0.8}
\newrgbcolor{ttttff}{0.2 0.2 1}
\newrgbcolor{ffttqq}{1 0.2 0}
\newrgbcolor{fffftt}{1 1 0.2}
\newrgbcolor{qqzzzz}{0 0.6 0.6}
\newrgbcolor{qqffcc}{0 1 0.8}
\newrgbcolor{ccttqq}{0.8 0.2 0}
\newrgbcolor{qqqqcc}{0 0 0.8}
\newrgbcolor{zzttqq}{0.6 0.2 0}
\newrgbcolor{ttqqcc}{0.2 0 0.8}
\newrgbcolor{qqqqzz}{0 0 0.6}
\newrgbcolor{ccqqqq}{0.8 0 0}
\newrgbcolor{ttqqzz}{0.2 0 0.6}
\newrgbcolor{qqttzz}{0 0.2 0.6}
\newrgbcolor{ccfftt}{0.8 1 0.2}
\psset{unit=.10cm,algebraic=true,dotstyle=o,dotsize=3pt 0,linewidth=0.8pt,arrowsize=3pt 2,arrowinset=0.25}
\begin{pspicture*}(-34.59,-33.95)(38.2,32.11)
\pscircle[linecolor=cccccc,fillcolor=cccccc,fillstyle=solid,opacity=0.5](0,0){26}
\pscircle[linecolor=ttttff,fillcolor=ttttff,fillstyle=solid,opacity=0.75](13,0){13}
\pscircle[linecolor=ffttqq,fillcolor=ffttqq,fillstyle=solid,opacity=0.75](-13,0){13}
\pscircle[linecolor=fffftt,fillcolor=fffftt,fillstyle=solid,opacity=0.75](-17.61,5.22){6.03}
\pscircle[linecolor=qqzzzz,fillcolor=qqzzzz,fillstyle=solid,opacity=0.75](-6.17,1.36){6.04}
\pscircle[linecolor=blue,fillcolor=blue,fillstyle=solid,opacity=0.75](-15.23,-6.6){6.04}
\pscircle[linecolor=fffftt,fillcolor=fffftt,fillstyle=solid,opacity=0.75](7.76,4.6){6.03}
\pscircle[linecolor=qqffcc,fillcolor=qqffcc,fillstyle=solid,opacity=0.75](19.6,2.21){6.04}
\pscircle[linecolor=red,fillcolor=red,fillstyle=solid,opacity=0.65](11.63,-6.83){6.04}
\pscircle[linecolor=ccttqq,fillcolor=ccttqq,fillstyle=solid,opacity=0.75](-19.61,7.48){3.02}
\pscircle[linecolor=qqqqcc,fillcolor=qqqqcc,fillstyle=solid,opacity=0.75](-15.62,2.96){3.02}
\pscircle[linecolor=zzttqq,fillcolor=zzttqq,fillstyle=solid,opacity=0.75](-9.13,0.77){3.02}
\pscircle[linecolor=ttqqcc,fillcolor=ttqqcc,fillstyle=solid,opacity=0.75](-3.21,1.95){3.02}
\pscircle[linecolor=qqqqzz,fillcolor=qqqqzz,fillstyle=solid,opacity=0.75](-14.26,-3.74){3.02}
\pscircle[linecolor=red,fillcolor=red,fillstyle=solid,opacity=0.75](-16.19,-9.46){3.02}
\pscircle[linecolor=ccqqqq,fillcolor=ccqqqq,fillstyle=solid,opacity=0.75](5.5,6.59){3.01}
\pscircle[linecolor=ttqqzz,fillcolor=ttqqzz,fillstyle=solid,opacity=0.75](10.02,2.61){3.01}
\pscircle[linecolor=ccqqqq,fillcolor=ccqqqq,fillstyle=solid,opacity=0.75](16.74,1.25){3.02}
\pscircle[linecolor=qqttzz,fillcolor=qqttzz,fillstyle=solid,opacity=0.75](22.46,3.17){3.02}
\pscircle[linecolor=ttqqcc,fillcolor=ttqqcc,fillstyle=solid,opacity=0.75](12.22,-3.87){3.02}
\pscircle[linecolor=ccfftt,fillcolor=ccfftt,fillstyle=solid,opacity=0.7](11.03,-9.79){3.02}
\end{pspicture*}
\end{center}
\caption{The decomposition of $\Qs$ by the filtration $\{e^{\psi(n)}\Zz\}$ for $S=\{2,3\}$.}
\end{figure}

For any element $x\in \Qs$ define the \textsf{order} of $x$ as:
\begin{equation*}
\ord(x) := 
\begin{cases} 
\max \left\{\, n\, :\, x\in e^{\psi(n)}\Zs \, \right\} & \text{ if } x\neq 0, \\
\infty  & \text{ if } x=0. 
\end{cases}
\end{equation*}

Notice that this order satisfies the following properties:
\begin{enumerate}[$\bullet$]
\item $\ord(x)\in \Z \cup\{\infty\}$ and $\ord(x)=\infty$ if and only if $x=0$, 
\item $\ord(x+y)\geq \min \{ \ord(x),\ord(y) \} $ and  
\item it takes the values
$$
S^{\Z} = \left\{ \, -p^{l} \, :\,  l \in \N  \text{ and } p\in S \, \right\} \cup 
\left\{ \, p^{l}-1 \, :\,  l \in \N  \text{ and } p\in S \, \right\} \cup \left\{\infty \right\}.
$$
\end{enumerate}

The nonnegative function  $d:\Qs\times \Qs\To \R^+\cup \{ 0\}$ given by
\[ d(x,y) = e^{- \psi \big(\ord(x-y)\big)} \]
is an ultrametric on $\Qs$. 
This ultrametric $d$ takes values in the set 
$\{ e^{\psi(n)} \}_{n\in \Z}\cup \{ 0\}$, any ball $B_n$ centred at zero with radius $e^{\psi(n)}$ is precisely the subgroup 
\[ B_n = B(0,e^{\psi(n)}) = e^{-\psi(n)}\Zs \qquad (n\in \Z), \]
 and any \textsf{sphere} centred at zero and radius $e^{\psi(n)}$ is 
\[ S_n = S(0,e^{\psi(n)})=B_n \backslash B_{n-1}. \]

The norm induced by this ultrametric is given by 
$$
\norm{x}=e^{- \psi (\ord(x))}
$$ and $\norm{x}=e^{\psi(n)}$ if and only if $x \in S_n$.

It is worth to notice that the radius of any ball on $\Qs$ is equal to its Haar measure:
\[ \int_{B_n+y} dx = \int_{B_n} dx  =  \int_{e^{-\psi(n)}\Zs} dx =  e^{\psi(n)} \qquad ( y \in \Qs, \, n \in \Z). \]
Using this fact, the area of any sphere is given by 
\[ \int_{S_n+y} dx = \int_{S_n} dx  =  e^{\psi(n)} - e^{\psi(n-1)}\qquad ( y \in \Qs, \, n \in \Z). \]

\begin{remark}
\label{prime_ramification}
If $e^{\psi(n)} < e^{\psi(n+1)}$,  $B_{n+1}$ is strictly contained in $B_n$; otherwise, if $e^{\psi(n)} = e^{\psi(n+1)}$,  $B_{n+1}=B_n$. This behaviour is controlled by the function $\Lambda(n)$  and  there exists a unique increasing bijective function 
$\rho :\Z  \To S^{\Z}$, such that $e^{\psi(\rho(n))}$ is a strictly increasing function with $e^{\psi(\rho(0))}=1$. For this reason, in the sequel, we suppose that 
$e^{\psi(n)} < e^{\psi(n+1)}$ for any integer number $n$.
\end{remark}

\section[Function spaces and pseudodifferential operators on $\Qs$]{Function spaces and pseudodifferential operators on $\Qs$}
\label{function_spaces}

The relevant spaces of test functions as well as the basic facts of Fourier analysis on $\Qs$ are described in this section. It also introduces a pseudodifferential operator $\Da$ on $\Qs$. The description made here follows closely the account made in \cite{CE2} for the finite ad\`ele ring of $\Q$. A second point of view is obtained by looking at the product structure of $\Qs$.

The reader can consult these topics in the excellent books  \cite{VVZ}, \cite{AKS}, \cite{Igu}. The theory of general topological vector spaces can be found in    \cite{Sch}.
 
\subsection[Bruhat--Schwartz test functions  on $\Qs$]{Bruhat--Schwartz test functions  on $\Qs$}
\label{bruhat-schwartz}

The \textsf{Bruhat--Schwartz space} $\D(\Qs)$ is the space of locally constant functions on $\Qs$ with compact support. Being $\Qs$ a totally disconnected space, $\D(\Qs)$ has a natural topology which can be described by two inductive limits,
given any filtration by compact and open sets in $\Qs$.

Let us describe the topology of $\D(\Qs)$ using the ultrametric $d$. If $ \phi \in \D(\Qs)$, there exists a smallest 
$\ell=\ell_\phi\in \Z$ such that, for every $x\in \Qs$, 
\[ \phi(x+y)=\phi(x),\quad \text{ for all } \; y \in B_{\ell}= e^{-\psi(\ell)} \Zs. \]
This number $\ell$ is called \textsf{the parameter of constancy} of $\phi$.
The set of all locally constant functions on $\Qs$ with common parameter of constancy 
  $\ell\in \Z$ and support in $B_k$ forms a finite dimensional complex vector space of dimension $e^{\psi(\ell)}/e^{\psi(k)}$. Denote this space by $\D^{\ell}_k(\Qs)$. The topology of $\D(\Qs)$ is expressed by the inductive limits
  $$
  \D^{\ell}(\Qs) = \varinjlim_{k} \D^{\ell}_k(\Qs) \quad \text{ and } \quad  
  \D(\Qs) =\varinjlim_{\ell} \D^{\ell}(\Qs),
  $$ which essentially states that every finite dimensional vector space $\D^{\ell}_k(\Qs)$ is open in $\D(\Qs)$. Therefore, $\D(\Qs)$ is a complete locally convex topological algebra over $\C$ and a nuclear space.

Finally, for each compact subset $K\subset \Qs$, let $\D(K)\subset \D(\Qs)$ be the subspace of test functions with support on a fixed compact subset $K$. The space $\D(K)$ is dense in $\Co(K)$, the space of complex valued continuous functions on $K$. The space $\D(\Qs)$ is dense in $L^2(\Qs)$.

\subsubsection[Bruhat--Schwartz test functions  as a tensor product]{Bruhat--Schwartz test functions  as a tensor product}
\label{bruhat-schwartzastensor}
Since the Tychonoff topology on $\Qs$ is the box topology, any function $\phi\in \D(\Qs)$ can be written as a finite linear combination of elementary functions of the form
\[ \phi(x) = \prod_{p\in S} \phi_p(x_p), \qquad \big(x= (x_p)_{p\in S} \in \Qs \big), \] 
where each factor $\phi_p(x_p)$ belongs to the space of test functions $\D(\Qp)$.  

Moreover, from the finite group  identification
$$
B_l/B_k = \prod_{p \in S} B^p_{\ell_p}/B^p_{k_p},
$$ where $\ell_p=\ord_p(e^{\psi(l)})$ and $k_p=\ord_p(e^{\psi(k)})$, the  following identity between finite dimensional spaces holds:
  $$
  \D^{\ell}_k(\Qs)= \bigotimes_{p \in S} \D^{\ell_p}_{k_p}(\Qp).
  $$
  
Commuting the induced limits with the tensor products,  
it is possible to show that $\D^{\ell}(\Qs)$ corresponds to the algebraic and topological tensor product of nuclear spaces $\{ \D^{\ell}(\Qp) \}_{p \in S}$, that is to say
$$\D^{\ell}(\Qs)= \bigotimes_{p \in S} \D^{\ell_p}(\Qp).$$
Furthermore, $\D(\Qs)$ corresponds to  the algebraic  and   topological tensor product of  nuclear spaces  $\{ \D(\Qp) \}_{p \in S}$, viz.,
\[ \D(\Qs) \cong \bigotimes_{p\in S} \D(\Qp). \]

\subsection[The Fourier transform on $\Qs$]{The Fourier transform on $\Qs$}
\label{The Fourier transform on Qs}

The Fourier transform of $\phi\in \D(\Qs)$ is defined by
\[ \widehat{\phi}(\xi) = \F[\phi](\xi) = \int_{\Qs} \phi(x) \chi(\xi x) dx, \qquad (\xi\in \Qs). \]
In particular, the Fourier transform of any elementary function $\phi=\prod_{p\in S}\phi_p$ is given by  
\[ \F[\phi](\xi) = \prod_{p\in S} \int_{\Qp} \phi_p(x_p) \chi_p(\xi_p x_p) dx_p, \qquad \left(\phi(x) = \prod_{p\in S} \phi_p(x_p), \  \xi=(\xi_p)_{p\in S} \in \Qs \right).  \]

The following two integrals are of main importance  for the properties of the Fourier transform on the Bruhat-Shwartz space,  $\D(\Qs)$.

\begin{lemma}
\begin{equation*}
\int_{B_n} \chi(-\xi x)dx = 
\begin{cases} 
e^{\psi(n)} & \text{ if } \norm{\xi} \leq e^{-\psi(n)}, \\
0  & \text{ if } \norm{\xi} > e^{-\psi(n)}. 
\end{cases}
\end{equation*}
\end{lemma}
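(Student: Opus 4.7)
My plan is to reduce this to the standard fact that the integral of a nontrivial character over a compact subgroup vanishes, exploiting the previously-established relation $\Ann_{\Qs}(B_n)=B_{-n}$.

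First I would rewrite the hypotheses in terms of the annihilator. Since $B_n=e^{-\psi(n)}\Zs$ and $B_{-n}=e^{\psi(n)}\Zs$, the identity $\norm{\xi}=e^{-\psi(\ord(\xi))}$ combined with $\psi(-n)=-\psi(n)$ gives the equivalence
\[
\norm{\xi}\leq e^{-\psi(n)}\;\Longleftrightarrow\;\ord(\xi)\geq n\;\Longleftrightarrow\;\xi\in B_{-n}=\Ann_{\Qs}(B_n).
\]
This converts the dichotomy in the statement into the dichotomy ``$\xi$ annihilates $B_n$'' vs.\ ``$\xi$ does not annihilate $B_n$''.

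In the first case, the character $x\mapsto\chi(-\xi x)$ is identically $1$ on $B_n$, so the integral collapses to the Haar measure of $B_n$, which by the normalisation $\int_{B_n}dx=e^{\psi(n)}$ established earlier yields the desired value $e^{\psi(n)}$.

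In the second case, the character $\chi(-\xi\,\cdot)$ restricted to the subgroup $B_n$ is nontrivial, so there exists $x_0\in B_n$ with $\chi(-\xi x_0)\neq 1$. Using translation invariance of the Haar measure $d\mu$ and the fact that $B_n$ is a group (so $x\mapsto x+x_0$ is a measure-preserving bijection of $B_n$), I would compute
\[
I:=\int_{B_n}\chi(-\xi x)\,dx=\int_{B_n}\chi(-\xi(x+x_0))\,dx=\chi(-\xi x_0)\,I,
\]
whence $(1-\chi(-\xi x_0))I=0$ forces $I=0$. The only obstacle is bookkeeping with the signs in the definition of $\psi$ on negative integers; once that equivalence $\{\norm{\xi}\leq e^{-\psi(n)}\}=\Ann_{\Qs}(B_n)$ is verified, the remainder is the standard character-orthogonality argument.
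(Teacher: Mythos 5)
Your proposal is correct and follows the same route as the paper: the paper's proof likewise invokes the relation $\Ann_{\Qs}(B_n)=B_{-n}$ together with the vanishing of the integral of a nontrivial character over a compact group, which you simply spell out via the translation argument. No gaps.
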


\begin{proof}
This follows from the equality, $\Ann_{\Qs}(B_n) = B_{-n}$ and the fact that in any compact topological group, the integral of a nontrivial character over the group is zero.
\end{proof}  

\begin{lemma}
\label{integral_sphere} 
For any $n\in \Z$,
\begin{equation*}
\int_{S_n} \chi(-\xi x)dx = 
\begin{cases} 
e^{\psi(n)} - e^{\psi(n-1)} & \text{ if } \norm{\xi} \leq e^{-\psi(n)}, \\
-e^{\psi(n-1)} & \text{ if } \norm{\xi} = e^{-\psi(n-1)}, \\
0  & \text{ if } \norm{\xi} \geq e^{-\psi(n-2)}. 
\end{cases}
\end{equation*}
\end{lemma}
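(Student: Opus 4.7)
The plan is to reduce this lemma to the preceding one by writing the sphere $S_n$ as a set difference of two concentric balls and then tracking the three arithmetic possibilities for $\|\xi\|$. Since the collection $\{B_k\}$ is a nested filtration by subgroups with $B_{n-1}\subsetneq B_n$ (by Remark \ref{prime_ramification}, $\psi$ is taken strictly increasing so the inclusion is strict), one has the disjoint decomposition $S_n = B_n\setminus B_{n-1}$, whence
\[
\int_{S_n}\chi(-\xi x)\,dx \;=\; \int_{B_n}\chi(-\xi x)\,dx \;-\;\int_{B_{n-1}}\chi(-\xi x)\,dx.
\]
Applying the previous lemma to each term reduces the problem to comparing $\|\xi\|$ with the two thresholds $e^{-\psi(n)}$ and $e^{-\psi(n-1)}$.

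Next, I would use that the norm $\|\cdot\|$ takes only the discrete values $\{e^{\psi(k)}\}_{k\in\Z}\cup\{0\}$, so the real line is partitioned for our purposes into exactly three regions relative to these two thresholds: (i) $\|\xi\|\le e^{-\psi(n)}$, (ii) the next admissible value $\|\xi\|=e^{-\psi(n-1)}$, and (iii) $\|\xi\|\ge e^{-\psi(n-2)}$, which by the strict monotonicity of $\psi$ is exactly the condition $\|\xi\|>e^{-\psi(n-1)}$. In region (i) both ball integrals give their full mass, yielding $e^{\psi(n)}-e^{\psi(n-1)}$. In region (ii) the integral over $B_n$ vanishes while the integral over $B_{n-1}$ still contributes $e^{\psi(n-1)}$, producing $-e^{\psi(n-1)}$. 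In region (iii) both ball integrals vanish and the result is $0$.

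The main subtlety, and the step I would be careful about, is the bookkeeping of signs and the translation between $e^{-\psi(n)}$ and the values $e^{\psi(k)}$ of the norm: one must invoke $\psi(-n)=-\psi(n)$ from Section \ref{ultrametrica} and the strictly increasing convention on $\psi$ of Remark \ref{prime_ramification} to conclude that \emph{no} admissible norm lies strictly between $e^{-\psi(n)}$ and $e^{-\psi(n-1)}$, and similarly none lies strictly between $e^{-\psi(n-1)}$ and $e^{-\psi(n-2)}$. With this discreteness in hand the three stated cases cover all possibilities and the formula follows directly from the subtraction above. No genuine analytic content beyond the previous lemma is needed; everything is a case analysis on the filtration.
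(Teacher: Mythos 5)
Your proposal is correct and follows exactly the paper's argument: decompose $S_n = B_n \setminus B_{n-1}$, subtract the two ball integrals from the preceding lemma, and sort the cases by $\norm{\xi}$. The paper's proof is just the one-line subtraction; your additional case bookkeeping (using $\psi(-n)=-\psi(n)$ and the strict monotonicity convention) fills in the details the paper leaves implicit.
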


\begin{proof}
The integral can be decomposed as
\[ \int_{S_n} \chi(-\xi x) dx = \int_{B_n} \chi(-\xi x) dx - \int_{B_{n-1}} \chi(-\xi x)dx, \]
and the last proposition provides the result.
\end{proof}

\begin{remark}  From the well known computations of analogous integrals on $\Qp$, another proof of the last lemmas can be derived directly.
\end{remark}

From   last propositions,  definitions of the spaces $\D^{\ell}_k(\Qs)$ and Fourier transform, it follows 
\[ \F: \D^{\ell}_k(\Qs)\To \D^{-k}_{-\ell}(\Qs). \]
Moreover,  the Fourier transform $\F$ is a continuous linear isomorphism from $\D(\Qs)$ into itself. 
It is worth to notice that this  property of the Fourier transform follows directly    by construction, because $\Ann_{\Qs} (B_n)=B_{-n}$.

\begin{remark} 
Recall that, in the case of $\Qp$, the Fourier transform $\F_p$ sends $\D^{\ell}_k(\Qp)$ into 
$\D^{-k}_{-\ell}(\Qp)$ and the identification of $\D^{\ell}_k(\Qs)$ with the tensor product, $ \otimes_{p \in S}\D^{\ell_p}_{k_p}(\Qp)$, of finite dimension spaces. Therefore, $ \F: \D^{\ell}_k(\Qs)\To \D^{-k}_{-\ell}(\Qs)$  and the Fourier transform gives and isomorphism $\D(\Qs)\cong \D(\Qs)$.

\end{remark}

From the description of $\Qs$ and $\Zs$, respectively, as an inductive  and a projective limit or from the description of $\Qs$ as the endspace of a regular infinite tree it follows that the Hilbert space $L^2(\Qs)$ has a numerable Hilbert base which is a counterpart of a wavelet bases. Therefore,   $L^2(\Qs)$ is a separable Hilbert space. In addition, the Fourier transform
\[ \F: L^2(\Qs) \To L^2(\Qs) \]
is an isometry.

\begin{remark}Notice that $L^2(\Qs) \cong \bigotimes_{p\in S} L^2(\Qp)$, where the tensor denotes the Hilbert tensor product, because each $L^2(\Qp)$ is a separable Hilbert space, then \[ \F: L^2(\Qs) \To L^2(\Qs) \] is an isometry.
\end{remark}

\subsection[Lizorkin space of test functions  on $\Qs$]{Lizorkin space of test functions  on $\Qs$}
\label{lizorkin}

Another  space of test functions which is useful in the study of the heat equation is the following: Let $\Psi(\Qs)$ be the space of test functions which vanish at zero
\begin{equation*}
\label{Lizorkin-imagen}
\Psi(\Qs) = \left\{\, f\in \D(\Qs) : f(0)=0  \,\right\}. 
\end{equation*}

This means that for any element $f\in \Psi(\Qs)$ there exists a ball $B_n$ with centre at zero and radius $e^{\psi(n)}$ such that $f_{\mid B_n} \equiv 0$. The image of $\Psi(\Qs)$ under the Fourier transform is the space
\begin{equation*}
\label{Lizorkin}
\Phi(\Qs) = \left\{\,  g : g=\F[f], \, f \in \Psi(\Qs) \,\right\} \subset \D(\Qs)
\end{equation*}
called the \textsf{Lizorkin space of test functions of the second kind}. The space $\Phi(\Qs)$ is nontrivial and, as a subspace of $\D(\Qs)$, it has the subspace topology which makes it a complete topological vector space. 

\subsection[Pseudodifferential operators on $\Qs$]{Pseudodifferential operators on $\Qs$}
\label{pseudodifferential_operators}

For any  $ \alpha>0$ consider the   function $\norm{\cdot}^\alpha:\Qs \To \R_{\geq 0}$. The pseudodifferential operator, 
$\Da:\Dom(\Da) \subset L^2(\Qs)\To L^2(\Qs)$, defined by the formulae
\[ \Da f = \F^{-1}_{\xi \to x}[\norm{\xi}^\alpha \F_{x \to \xi}[f]], \]
for any $f$ in the dense domain
\begin{equation*}
\Dom(\Da) = \left\{\, f \in L^2(\Qs) :\norm{\xi}^\alpha\widehat{f} \in L^2(\Qs) \,\right \}, 
\end{equation*}
is called a \textsf{pseudodifferential operator with symbol} $\norm{\xi}^\alpha$.

 In other words, if we consider the multiplicative operator  $m^{\alpha}:\Dom(m^{\alpha}) \subset L^2(\Qs)\To L^2(\Qs)$ given by
 $$m^{\alpha}( f) (\xi)=\norm{\xi}^\alpha f(\xi),$$ with (dense) domain 
\begin{equation*}
\Dom(m^{\alpha}) = \left\{\, f \in L^2(\Qs) : \norm{\xi}^\alpha f \in L^2(\Qs)  \,\right \}, 
\end{equation*}
the unbounded operator $\Da$ with domain $\Dom(\Da)$ is the unique unbounded operator  such that the diagram
\begin{equation}\label{DiagramaConmutativo}
\begin{CD}
L^{2}(\Qs) @>\F>>L^{2}(\Qs) \\
@V{\Da}VV @VV{m^{\alpha}}V\\
L^{2}(\Qs) @>\F>> L^{2}(\Qs) 
\end{CD}
\end{equation}
commutes. 

Therefore, several properties of $A$ can be translated into the multiplicative operator $m^{\alpha}$: $\Da$ is a positive selfadjoint unbounded operator. 
Moreover, the commutative property or the diagram above means that
$\Da$ is diagonalized by the unitary Fourier transform $\F$.

Since $\Da$ is a positive and selfadjoint operator, its  spectrum  is contained in $[0,\infty)$.
The characteristic equation 
\[ \Da f = \lambda f \qquad \big(f \in  L^2(\Qs)\setminus \{0\} \big)\]
is solved by applying it the Fourier transform  and solving the resulting equation,  $(m^{\alpha} - \lambda )\widehat{f}=0$, or 
\[ (\norm{\xi}^{\alpha} - \lambda )\widehat{f}(\xi)=0. \]

If $\lambda \in \{e^{\alpha \psi(n)}\}_{n\in \Z}$, the characteristic function of the sphere $S_n$, $\Delta_{S_n}$, is a solution of the characteristic equation of the multiplicative operator $m^\alpha$. Otherwise, if $\lambda \notin \{e^{\alpha \psi(n)}\}_{n\in \Z}$, the function $\norm{\xi}^{\alpha} - \lambda$ is bounded from below and $\lambda$ is in the resolvent set of the multiplicative operator. Since the Fourier transform is unitary, the point spectrum of $\Da$ is the set $\{e^{\alpha \psi(n)}\}_{n\in \Z}$, with corresponding eigenfunctions  
$\{ \F^{-1}(\Delta_{S_n}) \}_{n\in \Z}$. Finally, $\{0\}$ forms part of  the spectrum as a limit point. Each eigenspace is infinite dimensional and there exist a well defined wavelet base which is also made of eigenfunctions (see \cite{CE2}) .

\section[A heat  equation on $\Qs$]{A heat equation on $\Qs$}
\label{heat_equation}

This section presents the analysis of the homogeneous heat equation on $\Qs$ related to  the pseudodifferential operator $\Da$ introduced in Section \ref{pseudodifferential_operators}.  This study is  done by treating the heat equation as an evolution equation on the Hilbert space $L^2(\Qs,d\mu)$  of square integrable complex valued functions on $\Qs$. The properties of general evolution equations on Banach spaces can be found in \cite{Paz} and \cite{EN}. 

For any function $f(x) \in \Dom(\Da)$, the pseudodifferential equation of the form
\begin{equation}
\label{HeatEquation}
\begin{cases}
\frac{{\partial}u(x,t)}{{\partial t}} + \Da u(x,t)=0,  \ x \in \Qs, \ t > 0 ,  \\
u(x,0)=f(x),
\end{cases}
\end{equation} 
is a non--Archimedean counterpart to the Archimedean homogeneous heat equation. 

In the $L^2(\Qs)$ context, a function $u:\Qs\times \R \To \C$ is called  a \textsf{classical solution} of the Cauchy problem  if: 
\begin{enumerate}[a.]
\item $u:[0,\infty) \To L^2(\Qs)$ is a continuously differentiable function,
\item $u(x,t)\in \Dom(\Da)$, for all $t\geq 0$, (in particular  
$f\in \Dom(\Da)$) and,
\item $u(x,t)$ is a solution of the initial value problem. 
\end{enumerate}

This problem is called here  an abstract Cauchy problem and will be referred as   problem (\ref{HeatEquation}). This problem is well posed and  its concept of solution is well understood from the theory of semigroups of linear operators.  This solution is described in the following section.

\subsection[Semigroup of operators]{Semigroup of operators}
\label{semigroup_operators}

From  the Hille--Yoshida Theorem, to the positive selfadjoint operator $-\Da$ there corresponds a strongly continuous contraction semigroup 
$$\Ss(t)=\exp(-t\Da):L^2(\Qs)\To L^2(\Qs) \qquad(t \geq 0),$$ with infinitesimal generator $-\Da$. 

It follows that $\{\Ss(t)\}_{t\geq 0}$ has the following properties

\begin{itemize}

\item as a function of $t$, $\Ss(t)$ is strongly continuous,
\item for $\geq 1$, $\Ss(t)$ is a bounded operator with  operator norm less than one,
\item  $\Ss(0)$ is the identity operator in $L^2(\Qs)$, i.e.  $\Ss(0)(f)=f$, for all $f \in L^2(\Qs)$,
\item it has the semigroup property: $\Ss(t)\cdot \Ss(s)=\Ss(t+s)$ , 
\item 
 if $f\in \Dom(-\Da)$, then  $\Ss(t)f \in \Dom(-\Da)$ for all $t \geq 0$,
the $L^2$ derivative $\frac{d}{dt} \Ss(t)f$ exists, is continuous for $t \geq 0$, and is  given by
\[  \frac{d}{dt} \Ss(t)f \Big|_{t=t_0^+} = -\Da \Ss(t)f =  -\Ss(t)\Da f\qquad  (t_0 \geq 0 ).   \]
\end{itemize}

All these means that $\Ss(t)f$ is a classical solution of the heat equation (\ref{HeatEquation}) with initial condition $f\in \Dom(\Da)$  .

For any initial data $f\in  \Dom(\Da)$, the Fourier transform can be applied to equation (\ref{HeatEquation}), in the spatial variable $x$, in order to get the abstract Cauchy problem:
\begin{equation}\label{ACP3}
\begin{cases}
\widehat{u}_t(\xi,t) + \norm{\xi}^{\alpha} \widehat{u}(\xi,t) = 0, \quad \xi\in \Qs, t \geq 0,  \\
\widehat{u}(\xi,0) = \widehat{f}(\xi), \quad (\widehat{f} \in \Dom(m^\alpha)). 
\end{cases}
\end{equation}

The classical solution of this problem is the strongly continuous  contraction semigroup $
\exp(-tm^\alpha) :L^{2}(\Qs) \To L^{2}(\Qs)$ given by $$  f(\xi) \mapsto f(\xi) \exp(-t \norm{\xi}^\alpha).
$$

Furthermore, from the commutative diagram (\ref{DiagramaConmutativo}),  definitions of the infinitesimal generators of $\Ss(t)$ and $\exp(-tm^\alpha)$ and the abstract Cauchy problems (\ref{HeatEquation}), (\ref{ACP3})  and  the fact that the Fourier transform is unitary, for $t\geq 0$, the diagram

\begin{equation}
\begin{CD}
L^{2}(\Qs) @> \F >> L^{2}(\Qs) \\
@V{\Ss(t)}VV @VV\exp(-tm^\alpha)V\\
L^{2}(\Qs) @> \F >> L^{2}(\Qs) \notag
\end{CD}
\end{equation}
commutes. That is to say, $$\Ss(t)=\F^{-1}\exp(-tm^\alpha)\F.$$

\subsection[The heat kernel]{The heat kernel}
\label{heat_kernel}
The theoretical solution of the heat equation shown above can be found explicitly by introducing heat kernel:
\begin{align}
\label{HeatKernel}
Z(x,t) &= \F^{-1}\big(\exp(-t\norm{\xi}^{\alpha})\big) = 
\int_{\Qs} \chi(-x\xi) \exp(-t\norm{\xi}^{\alpha}) d\xi \notag.
\end{align}

We estimate the heat kernel using some properties of the Archimedean Gamma function $\Gamma(z)$.
Recall that $\Gamma(z)$ is a meromorphic function on the complex plane with simple poles at the nonpositive integers, satisfies the functional equation $\Gamma(z+1) = z\Gamma(z)$ and admits the integral representation 
\[ \Gamma(z) = \int_0^{\infty} s^{z-1} e^{-s} ds, \]
in the halfplane $\Real(z) > 0$. 

Putting the integral representation of the Archimedean Gamma function into its functional equation, we obtain
\begin{align*}
\Gamma(z+1)	&= z \int_0^{\infty} s^{z-1} e^{-s} ds \\
						&= \int_0^{\infty} e^{-s^{1/z}} ds,
\end{align*}
which is convergent in the halfplane $\Real(z) > 0$.

The first estimate of the heat kernel is given in the following (compare to \cite{Koc}, Lemma 4.1, pp. 134):

\begin{lemma}
\label{heat-estimatesI} 
For any $t>0$ and $\alpha >0$, the function $\xi \mTo \exp(-t\norm{\xi}^{\alpha})$
is integrable over $\Qs$ and consequently $Z(x,t)$ is well defined for any $t>0$ and $x\in \Qs$.
Furthermore, for any $t>0$ and $\alpha >0$,  the heat kernel $Z(x,t)$  satisfies the inequality
\[ \abs{Z(x,t)} \leq  C t^{-1/\alpha}, \qquad (x\in \Qs), \]
where $C$ is a constant depending on $\alpha$.
\end{lemma}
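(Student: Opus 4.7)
The strategy is to compute $\lVert \exp(-t\lVert\cdot\rVert^\alpha) \rVert_{L^1(\Qs)}$ by decomposing $\Qs$ into spheres and then to compare the resulting discrete sum with the Archimedean integral $\int_0^\infty e^{-tu^\alpha}\,du$, which after an elementary change of variables is controlled by $\Gamma(1+1/\alpha)\,t^{-1/\alpha}$.

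First, since $|\chi(-x\xi)|=1$, the obvious pointwise bound on the inverse Fourier integral gives
\[
\abs{Z(x,t)} \;\leq\; \int_{\Qs} \exp\bigl(-t\norm{\xi}^{\alpha}\bigr)\, d\mu(\xi),
\]
uniformly in $x$. Because $\norm{\xi}=e^{\psi(n)}$ precisely on the sphere $S_n$ and $\Qs\setminus\{0\}$ is the disjoint union $\bigsqcup_{n\in\Z} S_n$, and since $\mu(S_n)=e^{\psi(n)}-e^{\psi(n-1)}$ by the computation recorded after Remark \ref{prime_ramification}, the integrand is constant on each $S_n$ and
\[
\int_{\Qs} \exp\bigl(-t\norm{\xi}^{\alpha}\bigr)\, d\mu(\xi)
\;=\; \sum_{n\in\Z} \bigl(e^{\psi(n)}-e^{\psi(n-1)}\bigr)\, \exp\bigl(-t\, e^{\alpha\psi(n)}\bigr).
\]
Under the standing assumption that $e^{\psi(n)} < e^{\psi(n+1)}$ for every $n\in\Z$ (Remark \ref{prime_ramification}), each summand is positive, so we need only bound this series.

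Next, the decisive step is the comparison with a one-dimensional Riemann integral. The function $u\mapsto e^{-tu^\alpha}$ is strictly decreasing on $(0,\infty)$, so on every interval $[e^{\psi(n-1)},e^{\psi(n)}]$ one has
\[
\bigl(e^{\psi(n)}-e^{\psi(n-1)}\bigr)\exp\bigl(-t\, e^{\alpha\psi(n)}\bigr)
\;\leq\; \int_{e^{\psi(n-1)}}^{e^{\psi(n)}} e^{-tu^\alpha}\, du.
\]
Summing over $n\in\Z$ and using $\bigcup_{n} [e^{\psi(n-1)},e^{\psi(n)}] = (0,\infty)$ yields
\[
\sum_{n\in\Z} \bigl(e^{\psi(n)}-e^{\psi(n-1)}\bigr)\, \exp\bigl(-t\, e^{\alpha\psi(n)}\bigr)
\;\leq\; \int_{0}^{\infty} e^{-tu^\alpha}\, du.
\]
This immediately proves integrability, hence that $Z(x,t)$ is well defined for all $t>0$ and $x\in\Qs$.

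Finally, the substitution $s=tu^\alpha$, $du=\tfrac{1}{\alpha}t^{-1/\alpha}s^{1/\alpha-1}ds$, together with the integral representation of the Archimedean Gamma function quoted above, gives
\[
\int_{0}^{\infty} e^{-tu^\alpha}\, du
\;=\; \frac{t^{-1/\alpha}}{\alpha}\int_0^{\infty} s^{1/\alpha-1}e^{-s}\, ds
\;=\; \Gamma\!\left(1+\tfrac{1}{\alpha}\right) t^{-1/\alpha},
\]
so that $\abs{Z(x,t)} \leq C\, t^{-1/\alpha}$ with $C=\Gamma(1+1/\alpha)$, a constant depending only on $\alpha$.

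The only genuine subtlety lies in the comparison step: one must ensure that the sum is taken over all $n\in\Z$ (including the tail $n\to-\infty$ where the spheres shrink to zero and where a priori the decaying factor $e^{-te^{\alpha\psi(n)}}$ is close to $1$); this is exactly why the bound must be phrased in terms of $\int_0^\infty$, with the lower tail absorbed by the integrability of $e^{-tu^\alpha}$ near $u=0$. Everything else is routine.
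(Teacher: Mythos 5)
Your proof is correct and follows essentially the same route as the paper: decompose $\Qs$ into the spheres $S_n$, use $\mu(S_n)=e^{\psi(n)}-e^{\psi(n-1)}$ to turn the integral into a series, dominate the series by $\int_0^\infty e^{-tu^\alpha}\,du$, and evaluate that integral as $\Gamma(1+1/\alpha)\,t^{-1/\alpha}$, arriving at the same constant $C=\Gamma(1/\alpha+1)$. The only difference is that you justify the comparison of the series with the Archimedean integral via monotonicity of $u\mapsto e^{-tu^\alpha}$ on the tiling intervals $[e^{\psi(n-1)},e^{\psi(n)}]$, a step the paper asserts without comment.
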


\begin{proof}
Since the Haar measure of any ball is equal to its radius, we obtain
\begin{align*}
\int_{\Qs} \exp(-t\norm{\xi}^{\alpha}) d\xi
&= \sum_{n=-\infty}^{\infty} \int_{S_n} \exp(-t\norm{\xi}^{\alpha}) d\xi \\
&= \sum_{n=-\infty}^{\infty} \exp(-te^{\alpha\psi(n)}) \big(e^{\psi(n)}-e^{\psi(n-1)}\big) \\
&<\int_0^{\infty} \exp(-ts^{\alpha}) ds \\
&=t^{-1/\alpha}\Gamma(1/\alpha+1). 
\end{align*}
Since $Z(x,t)	=    \int_{\Qs} \chi(-x\xi) \exp(-t\norm{\xi}^{\alpha}) d\xi $,
this also  proves the second assertion with $C=\Gamma(1/\alpha+1)$.

\end{proof}

\begin{proposition}\label{freeHeatKernel} 
The heat kernel $Z(x,t)$ is a positive function for all $x $ and $t>0 $. In addition 

\begin{align*}
Z(x,t)  =\sum_{\substack{n\in \Z \\ e^{\psi(n)} \leq \norm{x}^{-1}}} e^{\psi(n)} 
\left\{\exp(-t e^{\alpha\psi(n)}) - \exp(-t e^{\alpha\psi(n+1)}) \right\} \notag. 
\end{align*}

\end{proposition}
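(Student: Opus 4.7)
The plan is to start from the definition $Z(x,t) = \int_{\Qs} \chi(-x\xi) \exp(-t\norm{\xi}^{\alpha}) d\xi$, which is justified by Lemma~\ref{heat-estimatesI}, and exploit the fact that $\norm{\xi}^{\alpha}$ is constant on each sphere $S_n$. Decomposing $\Qs = \bigsqcup_{n \in \Z} S_n$, the absolute integrability lets me interchange sum and integral to obtain
\[
Z(x,t) = \sum_{n \in \Z} \exp(-t e^{\alpha\psi(n)}) \int_{S_n} \chi(-x\xi)\, d\xi.
\]
By the symmetry $\chi(-x\xi) = \chi(-\xi x)$, Lemma~\ref{integral_sphere} evaluates the inner integrals, so the whole computation reduces to substituting these three values and rearranging.

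Fix $x \neq 0$ and set $m := \ord(x)$, so $\norm{x} = e^{-\psi(m)}$. Translating the hypotheses of Lemma~\ref{integral_sphere} through $\norm{x} \leq e^{-\psi(n)} \iff n \leq m$, $\norm{x} = e^{-\psi(n-1)} \iff n = m+1$, and $\norm{x} \geq e^{-\psi(n-2)} \iff n \geq m+2$, the series collapses to
\[
Z(x,t) = \sum_{n \leq m} \exp(-t e^{\alpha\psi(n)})\bigl(e^{\psi(n)} - e^{\psi(n-1)}\bigr) - \exp(-t e^{\alpha\psi(m+1)})\, e^{\psi(m)}.
\]
Now I apply Abel summation. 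Writing $A_n = \exp(-t e^{\alpha\psi(n)})$ and $B_n = e^{\psi(n)}$, shifting the index in $\sum_{n\leq m} A_n B_{n-1}$ and combining with the boundary term $-A_{m+1}B_m$ gives
\[
Z(x,t) = \sum_{n \leq m} B_n (A_n - A_{n+1}) = \sum_{\substack{n\in\Z \\ e^{\psi(n)} \leq \norm{x}^{-1}}} e^{\psi(n)}\bigl\{\exp(-t e^{\alpha\psi(n)}) - \exp(-t e^{\alpha\psi(n+1)})\bigr\},
\]
since $n \leq m$ is the same as $e^{\psi(n)} \leq e^{\psi(m)} = \norm{x}^{-1}$. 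The case $x = 0$ is the limit $m \to \infty$, corresponding to summing over all $n \in \Z$ with no upper bound, and it can be handled directly from $Z(0,t) = \int_{\Qs}\exp(-t\norm{\xi}^\alpha)\,d\xi$ via the same spherical decomposition.

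For positivity, I use Remark~\ref{prime_ramification}: the filtration was chosen so that $\psi$ is strictly increasing, whence $e^{\alpha\psi(n+1)} > e^{\alpha\psi(n)}$ and therefore $\exp(-t e^{\alpha\psi(n)}) - \exp(-t e^{\alpha\psi(n+1)}) > 0$ for every $t>0$. Each summand is strictly positive, and the sum is nonempty (it contains arbitrarily negative $n$), so $Z(x,t) > 0$.

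The main obstacle is purely bookkeeping: making sure the three cases of Lemma~\ref{integral_sphere} are correctly translated into conditions on $n$ relative to $m = \ord(x)$, and handling the Abel summation so that the isolated boundary term $-A_{m+1}B_m$ recombines cleanly with the sphere contributions into a single telescoping form. Absolute convergence of all series in play is guaranteed because $e^{\psi(n)} \to 0$ and $A_n \to 1$ as $n \to -\infty$, and by Lemma~\ref{heat-estimatesI} for the tail $n \to +\infty$.
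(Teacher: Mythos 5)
Your proof is correct and follows essentially the same route as the paper's: decompose $\Qs$ into spheres, evaluate $\int_{S_n}\chi(-x\xi)\,d\xi$ via Lemma~\ref{integral_sphere}, and telescope by Abel summation. You are somewhat more careful than the paper about the index bookkeeping, the $x=0$ case, and convergence of the tails, but the underlying argument is identical.
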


\begin{proof} 
Using Proposition \ref{integral_sphere}, if $\norm{x}=e^{-\psi(m)}$, then
\begin{align*}
Z(x,t) &= \sum_{n=-\infty}^{\infty} \int_{S_n} \chi(-x \xi) \exp(-t\norm{\xi}^{\alpha}) d\xi \\
& = \sum_{n=-\infty}^{\infty} \exp(-t e^{\alpha\psi(n)}) \int_{S_n} \chi(-x \xi) d\xi \notag \\
& = \sum_{n=-\infty}^{m+1} \exp( -t e^{\alpha\psi(n)}) \int_{S_n} \chi(-x \xi) d\xi \notag \\ 
& = -\exp(-t e^{\alpha\psi(m+1)}) e^{\psi(m)} + 
\sum_{n=-\infty }^{m}\exp(-t e^{\alpha\psi(n)}) (e^{\psi(n)} - {e^{\psi(n-1)}})\\
& = \sum_{n=-\infty}^{m} 
e^{\psi(n)} \left\{\exp(-t e^{\alpha\psi(n)}) - \exp( -t e^{\alpha\psi(n+1)}) \right\} \notag \\
& =\sum_{\substack{n\in \Z \\ e^{\psi(n)} \leq \norm{x}^{-1}}} e^{\psi(n)} 
\left\{\exp(-t e^{\alpha\psi(n)}) - \exp(-t e^{\alpha\psi(n+1)}) \right\} \notag. 
\end{align*}

This implies that $Z(x,t)$ is a positive function for all $x $ and $t>0 $. 

\end{proof}

\begin{remark} It is important to notice that the expression of the heat kernel in Proposition \ref{freeHeatKernel}  does not depend on the algebraic structure of $\Qs$: it depends only on the values of the second Chebyshev function related to $S$. 
\end{remark}

\begin{lemma}
\label{heat-estimatesII} 
For any $t>0$, $\alpha >0$ and $x\in \Qs$, the heat kernel $Z(x,t)$ is positive and satisfies the inequality
\[ Z(x,t) \leq  C t\norm{x}^{-\alpha-1}, \qquad (x\in \Qs, t>0), \]
where $C$ is a constant depending on $S$ and $\alpha$.
\end{lemma}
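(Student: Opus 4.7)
The positivity is essentially immediate from Proposition \ref{freeHeatKernel}: since $\psi$ is strictly increasing (Remark \ref{prime_ramification}), we have $e^{\alpha\psi(n)} < e^{\alpha\psi(n+1)}$, so each summand $e^{\psi(n)}\{\exp(-te^{\alpha\psi(n)}) - \exp(-te^{\alpha\psi(n+1)})\}$ is strictly positive for $t>0$, and the sum inherits positivity.

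For the upper bound, I would start from the explicit series in Proposition \ref{freeHeatKernel} and pull out the factor $t$ using the mean value theorem: for each $n$,
\begin{equation*}
\exp(-te^{\alpha\psi(n)}) - \exp(-te^{\alpha\psi(n+1)}) = t\big(e^{\alpha\psi(n+1)} - e^{\alpha\psi(n)}\big)\exp(-t\eta_n) \leq t\,e^{\alpha\psi(n+1)},
\end{equation*}
since $\exp(-t\eta_n) \leq 1$ for $\eta_n \in [e^{\alpha\psi(n)}, e^{\alpha\psi(n+1)}]$. Writing $\norm{x} = e^{-\psi(m)}$, the sum becomes
\begin{equation*}
Z(x,t) \leq t \sum_{n\leq m} e^{\psi(n)}\,e^{\alpha\psi(n+1)}.
\end{equation*}

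Next I would control $e^{\psi(n+1)}$ in terms of $e^{\psi(n)}$ using the arithmetic structure of $\psi$. Setting $P = \max_{p\in S} p$ and $\kappa = \min_{p\in S} p \geq 2$, the assumption $e^{\psi(n)} < e^{\psi(n+1)}$ (Remark \ref{prime_ramification}) forces $\Lambda(n+1) = \log q$ for some $q\in S$, so
\begin{equation*}
\kappa \leq e^{\Lambda(n+1)} = e^{\psi(n+1)}/e^{\psi(n)} \leq P.
\end{equation*}
Hence $e^{\alpha\psi(n+1)} \leq P^\alpha e^{\alpha\psi(n)}$ and, iterating downward from $m$, $e^{\psi(n)} \leq \kappa^{-(m-n)} e^{\psi(m)}$ for $n \leq m$.

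Substituting and summing the geometric series in $k = m-n$,
\begin{equation*}
Z(x,t) \leq t\,P^\alpha \sum_{n\leq m} \bigl(e^{\psi(n)}\bigr)^{\alpha+1} \leq t\,P^\alpha\,\bigl(e^{\psi(m)}\bigr)^{\alpha+1} \sum_{k=0}^{\infty} \kappa^{-k(\alpha+1)} = \frac{P^\alpha}{1-\kappa^{-(\alpha+1)}}\,t\,\norm{x}^{-\alpha-1},
\end{equation*}
where I used $e^{\psi(m)} = \norm{x}^{-1}$. Taking $C = P^\alpha/(1-\kappa^{-(\alpha+1)})$, which depends only on $S$ and $\alpha$ (note $\kappa \geq 2$ so the geometric series converges for every $\alpha > 0$), yields the claimed inequality.

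The only slightly delicate step is the geometric decay bound $e^{\psi(n)} \leq \kappa^{-(m-n)} e^{\psi(m)}$; it hinges on the standing convention that $e^{\psi(n)}$ is strictly increasing, so that each ratio $e^{\psi(n+1)}/e^{\psi(n)}$ is bounded below by a prime in $S$ and not merely by $1$. Everything else is routine estimation, so I do not foresee a real obstacle.
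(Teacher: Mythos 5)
Your proof is correct, but it takes a different route from the paper's. The paper exploits the fact that the series in Proposition \ref{freeHeatKernel} telescopes: after pulling out the crude bound $e^{\psi(n)} \leq e^{\psi(m)} = \norm{x}^{-1}$ from every summand, the remaining sum collapses to $1 - \exp(-t e^{\alpha\psi(m+1)})$, which is then bounded by $t e^{\alpha\psi(m+1)}$ via the elementary inequality $1 - e^{-s} \leq s$; the only arithmetic input is the single ratio $e^{\psi(m+1)}/e^{\psi(m)} = e^{\Lambda(m+1)} \leq \max_{p\in S} p$, yielding $C = \max_{p\in S} p^{\alpha}$. You instead bound each term individually by the mean value theorem and then sum a geometric series, which forces you to invoke both the upper bound $e^{\Lambda(n+1)} \leq P$ and the lower bound $e^{\Lambda(n+1)} \geq \kappa \geq 2$ (the latter guaranteed by the strict-increase convention of Remark \ref{prime_ramification}) and produces the slightly larger constant $P^{\alpha}/(1-\kappa^{-(\alpha+1)})$. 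Both arguments are sound and both ultimately rest on the uniform boundedness of the indices $\abs{B_{n+1}/B_n}$ noted in Remark \ref{ramification_index}; the telescoping argument is a bit shorter and avoids the geometric-series step entirely, while yours has the minor virtue of making explicit, term by term, where the factor $t$ and the decay in $\norm{x}$ each come from.
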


\begin{proof}

In order to prove the second assertion we proceed as follows.
From the inequality 
$1-e^{-s}\leq s$, valid for $s\geq 0$, it follows the inequality 
\begin{align*}
Z(x,t) &\leq \norm{x}^{-1}\sum_{\substack{n\in \Z \\ e^{\psi(n)} \leq \norm{x}^{-1}}} 
\left\{ \exp(-t e^{\alpha\psi(n)}) - \exp(-t e^{\alpha\psi(n+1)}) \right\}  \\
 &\leq \norm{x}^{-1}\big (1 - \exp(-t e^{\alpha\psi(m+1)})\big) \leq t \norm{x}^{-1} e^{\alpha\psi(n+1)}\\
&= te^{\alpha\psi(m)} e^{\alpha\Lambda(m+1)} e^{\psi(m)} \\
&= te^{(\alpha+1) \psi(m)} e^{\alpha\Lambda(m+1)} \\
&= t\norm{x}^{-\alpha-1} e^{\alpha\Lambda(m+1)}.
\end{align*}

As a result
\[ Z(x,t) \leq  C t\norm{x}^{-\alpha-1}, \qquad (x\in \Qs, \, t>0), \]
where $C=\max_{p\in S} \{p^{\alpha}\}$. 
\end{proof}

\begin{remark}
\label{ramification_index}
The constant involved in the last result depends on  the growing behaviour of the group index $\abs{B_n/B_{n-1}}$ of two consecutive balls centred at zero, which is a uniformly bounded quantity.  
\end{remark}

The following proposition exhibits the relevant heat kernel estimate.

\begin{proposition} \emph{\textsf{(Heat kernel estimates)}}
\label{heat-estimates} 
\[ Z(x,t) \leq Ct(t^{1/\alpha} + \norm{x})^{-\alpha - 1},\qquad (x\in \Qs, t>0). \]
\end{proposition}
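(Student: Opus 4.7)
The plan is to derive this combined bound by splitting into two regimes based on whether $\|x\|$ is small or large compared to the natural space-time scaling $t^{1/\alpha}$, and in each regime use the better of the two previous estimates. Specifically, Lemma \ref{heat-estimatesI} gives $Z(x,t) \leq C_1 t^{-1/\alpha}$ (a time-only bound, sharp for small $\|x\|$), while Lemma \ref{heat-estimatesII} gives $Z(x,t) \leq C_2 t \|x\|^{-\alpha-1}$ (useful for large $\|x\|$). The target bound interpolates between these two on either side of the dividing scale $\|x\| \sim t^{1/\alpha}$.

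First I would handle the case $\|x\| \leq t^{1/\alpha}$. Here one has the elementary inequality
\[
t^{1/\alpha} + \|x\| \leq 2\, t^{1/\alpha},
\]
so $(t^{1/\alpha} + \|x\|)^{-\alpha-1} \geq 2^{-\alpha-1} t^{-(\alpha+1)/\alpha}$. Multiplying by $t$, this says $t(t^{1/\alpha} + \|x\|)^{-\alpha-1} \geq 2^{-\alpha-1} t^{-1/\alpha}$. Combining with Lemma \ref{heat-estimatesI} yields
\[
Z(x,t) \leq C_1 t^{-1/\alpha} \leq 2^{\alpha+1} C_1 \, t (t^{1/\alpha} + \|x\|)^{-\alpha-1}.
\]

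Next, for the complementary case $\|x\| > t^{1/\alpha}$, one has $t^{1/\alpha} + \|x\| \leq 2\|x\|$, and therefore $\|x\|^{-\alpha-1} \leq 2^{\alpha+1}(t^{1/\alpha} + \|x\|)^{-\alpha-1}$. Applying Lemma \ref{heat-estimatesII} gives
\[
Z(x,t) \leq C_2 t\|x\|^{-\alpha-1} \leq 2^{\alpha+1} C_2 \, t(t^{1/\alpha} + \|x\|)^{-\alpha-1}.
\]
Taking $C = 2^{\alpha+1}\max(C_1, C_2)$ in both cases delivers the claimed inequality uniformly in $x \in \Qs$ and $t > 0$.

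There is really no substantive obstacle: all the analytic work has already been carried out in Lemmas \ref{heat-estimatesI} and \ref{heat-estimatesII}, and the remaining step is the standard two-regime interpolation trick. The only subtlety worth double-checking is that the constant produced is genuinely independent of $x$ and $t$, which follows because $C_1$ depends only on $\alpha$ and $C_2$ depends only on $\alpha$ and the uniformly bounded ramification indices $[B_n : B_{n-1}]$ (as noted in Remark \ref{ramification_index}). Hence $C$ depends only on $\alpha$ and $S$, which is consistent with the way $Z(x,t)$ is built from the Chebyshev filtration.
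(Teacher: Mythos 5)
Your proof is correct and is exactly the argument the paper intends: the authors simply state that the proposition is a ``straight consequence'' of Lemmas \ref{heat-estimatesI} and \ref{heat-estimatesII}, and your two-regime split at $\norm{x} \sim t^{1/\alpha}$ is the standard way to make that deduction explicit. Nothing to change.
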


\begin{proof}
This is a straight  consequence of  Lemma \ref{heat-estimatesI} and Lemma \ref{heat-estimatesII} (see \cite{Koc}, Lemma 4.1, pp. 134).
\end{proof}

\begin{proposition}
\label{heat-distribution}
The heat kernel satisfies the following properties:
\begin{itemize}
\item It is the distribution of a probability measure on $\Qs$, i.e. $Z(x,t)\geq 0$ and
\[ \int_{\Qs} Z(x,t)dx = 1, \] 
for all $t>0$. 
 
\item It converges to the Dirac distribution as $t$ tends to zero:
\[ \lim_{t \to 0} \int_{\Qs} Z(x,t) f(x) dx = f(0), \] 
for all $f\in \D(\Qs)$.

\item It has the Markovian property:
\[ Z(x,t+s) = \int_{\Qs} Z(x-y,t) Z(y,s) dy. \]
\end{itemize}
\end{proposition}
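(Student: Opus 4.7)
The three assertions share a common source: the Fourier-analytic identity
\[ \widehat{Z}(\xi,t) = \exp(-t\norm{\xi}^\alpha), \]
which follows by $L^1$ Fourier inversion once one knows that $Z(\cdot,t) \in L^1(\Qs)$. Lemma \ref{heat-estimatesI} provides the required integrability of $\xi \mapsto \exp(-t\norm{\xi}^\alpha)$, while Proposition \ref{heat-estimates} combined with a sphere-by-sphere summation of $\norm{x}^{-\alpha-1}$ (convergent because $\psi(n)\to\infty$) yields $Z(\cdot,t)\in L^1(\Qs)$. The plan is to derive each of the three properties from this identity together with one elementary analytic tool.

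The normalization is obtained by setting $\xi=0$ in $\widehat{Z}(\xi,t)=\exp(-t\norm{\xi}^\alpha)$: since $\chi$ is trivial at the origin, this evaluates to $\int_{\Qs} Z(x,t)\,dx=1$, and positivity is already Proposition \ref{freeHeatKernel}. The Markov property is equally immediate: the semigroup identity
\[ \exp(-(t+s)\norm{\xi}^\alpha) = \exp(-t\norm{\xi}^\alpha)\exp(-s\norm{\xi}^\alpha) \]
combined with the convolution theorem $\F^{-1}[\widehat{u}\widehat{v}]=u*v$ applied to $u=Z(\cdot,t)$, $v=Z(\cdot,s)$ (valid because both factors lie in $L^1(\Qs)$) yields $Z(x,t+s)=\int_{\Qs}Z(x-y,t)Z(y,s)\,dy$.

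For the Dirac-delta limit, I would take $f\in \D(\Qs)$, insert the Fourier integral representation of $Z(x,t)$, and apply Fubini, which is legitimate because $f$ is compactly supported and $\exp(-t\norm{\cdot}^\alpha)\in L^1(\Qs)$, to obtain
\[ \int_{\Qs} Z(x,t) f(x)\,dx = \int_{\Qs} \exp(-t\norm{\xi}^\alpha)\,\widehat{f}(-\xi)\,d\xi. \]
As $t\to 0^+$, the integrand converges pointwise to $\widehat{f}(-\xi)$ and is dominated by $|\widehat{f}(-\xi)|$, which is integrable since $\widehat{f}\in \D(\Qs)$ has compact support. Dominated convergence produces $\int_{\Qs}\widehat{f}(-\xi)\,d\xi$, and the Fourier inversion formula evaluated at $x=0$, together with invariance of the Haar measure under $\xi\mapsto -\xi$, identifies this integral with $f(0)$. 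I expect the only delicate point to be the initial justification of the $L^1$ Fourier inversion for $Z(\cdot,t)$, since all three arguments rest on it; once this is in place, the remainder is routine application of Fubini, dominated convergence, and the convolution theorem within the Bruhat--Schwartz framework.
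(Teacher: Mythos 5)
Your proposal is correct, and for the normalization and the Markov property it follows exactly the paper's route: establish $Z(\cdot,t)\in L^1(\Qs)$ from the kernel estimates, then invoke $L^1$ Fourier inversion (so that $\F[Z(\cdot,t)]=\exp(-t\norm{\cdot}^\alpha)$, evaluated at $\xi=0$ for the mass, and multiplied via the convolution theorem for the semigroup identity). Where you genuinely diverge is the Dirac--delta limit. The paper stays on the spatial side: it uses the already-proved normalization to write $\int_{\Qs}Z(x,t)f(x)\,dx-f(0)=\int_{\Qs}Z(x,t)\bigl(f(x)-f(0)\bigr)\,dx$, notes that local constancy of $f\in\D(\Qs)$ kills the integrand on a fixed ball $B_\ell$ around the origin, and then uses the bound $Z(x,t)\leq Ct\norm{x}^{-\alpha-1}$ of Proposition \ref{heat-estimates} to show the remaining integral is $O(t)$. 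You instead pass to the Fourier side via Fubini, obtaining $\int_{\Qs}\exp(-t\norm{\xi}^\alpha)\widehat{f}(-\xi)\,d\xi$, and conclude by dominated convergence and inversion at $x=0$. Both arguments are sound; yours trades the pointwise decay estimate and the ultrametric-specific local constancy for soft integrability of $\widehat{f}$, and so would survive for any initial datum with integrable Fourier transform, while the paper's version makes visible the quantitative rate $O(t)$ of concentration and exploits the non-Archimedean structure more directly. The one point worth flagging explicitly in your write-up is the justification that $\F[Z(\cdot,t)]$ and $\exp(-t\norm{\cdot}^\alpha)$ agree \emph{everywhere} rather than a.e.\ before evaluating at the single point $\xi=0$; continuity of both sides (which the paper also relies on, citing continuity of $\exp(-t\norm{\xi}^\alpha)$) closes that gap.
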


\begin{proof} 
From Proposition \ref{heat-estimates}, it follows that $Z(x,t)$ is in $L^1(\Qs)$, for any $t>0$. Indeed,
\begin{align*}
\int_{\Qs} Z(x,t)dx &=\int_{\Zs} Z(x,t)dx +\int_{\Qs \backslash \Zs} Z(x,t)dx  \\
& \leq C_1 + C_2\int_{\Qs \backslash \Zs} \frac{1}{\norm{x}^{1+\alpha}}dx\\
& \leq C_1 + C_2\int_1^\infty \frac{1}{s^{1+\alpha}}ds.
\end{align*}

Being $\exp(-t\norm{\xi}^{\alpha})$ a continuous function on $\xi$, the Fourier inversion formula implies
\[ \int_{\Qs} Z(x,t) dx = 1. \]

Using this equality, the fact that $f\in \D(\Qs)$ is a locally constant function of  compact support and Proposition \ref{heat-estimates}, we conclude that
\[ \lim_{t \to 0} \int_{\Qs} Z(x,t) f(x) dx = f(0). \]

The Markovian property follows from the Fourier inversion formula and the related property  of the exponential function.
\end{proof}

\begin{proposition}
\label{convergence} 
For any $t>0$ and $\alpha,\beta >0$, the function
\[ \xi \longmapsto \norm{\xi}^\beta \exp(-t\norm{\xi}^{\alpha}) \]
is integrable over $\Qs$. Therefore, $Z(x,t)$ is smooth with respect to $t$ and the derivative 
\[ 
\frac{d}{dt} Z(x,t) = 
-\int_{\Qs} \norm{\xi}^{\alpha} \chi(-x\xi) \exp(-t \norm{\xi}^{\alpha}) d\xi  \qquad( t >0 )
\]
is convergent. Furthermore, $Z(x,t)$ is uniformly continuous on $t$, i.e. $Z(x, t) \in C\big((0, \infty), C(\Qs )\big).$
\end{proposition}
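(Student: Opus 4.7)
The plan is to first establish integrability of $\xi \mapsto \norm{\xi}^\beta \exp(-t\norm{\xi}^\alpha)$, and then obtain the derivative formula and the uniform continuity by routine dominated-convergence arguments. I will avoid a direct sphere-by-sphere computation by reducing everything to Lemma \ref{heat-estimatesI} through an elementary pointwise bound.

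For the integrability step, the Archimedean function $s \mapsto s^\beta \exp(-ts^\alpha/2)$ attains a finite maximum $M_{\alpha,\beta,t}$ on $[0,\infty)$, so
\[
\norm{\xi}^\beta \exp(-t\norm{\xi}^\alpha) = \norm{\xi}^\beta \exp(-t\norm{\xi}^\alpha/2)\,\exp(-t\norm{\xi}^\alpha/2) \le M_{\alpha,\beta,t}\,\exp(-t\norm{\xi}^\alpha/2).
\]
Lemma \ref{heat-estimatesI} applied with $t/2$ in place of $t$ shows that the right-hand side is integrable over $\Qs$, hence so is the left-hand side.

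For the derivative formula, I will differentiate under the integral sign. Fixing $t_0 > 0$ and restricting $t$ to the compact interval $[t_0/2,\,3t_0/2]$, the $t$-derivative of the integrand $\chi(-x\xi)\exp(-t\norm{\xi}^\alpha)$ is bounded in modulus by $\norm{\xi}^\alpha \exp(-t_0\norm{\xi}^\alpha/2)$, which is integrable by the previous step (with $\beta = \alpha$ and $t$ replaced by $t_0/2$). The Leibniz rule then yields the claimed formula for $\frac{d}{dt}Z(x,t)$, and iterating with $\beta = k\alpha$ gives $C^\infty$ dependence on $t$ on $(0,\infty)$.

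For the continuity assertion $Z \in C\bigl((0,\infty), C(\Qs)\bigr)$, I will verify the two requirements separately. First, for each fixed $t > 0$, dominated convergence applied to the defining integral with dominator $\exp(-t\norm{\xi}^\alpha)$ yields continuity of $x \mapsto Z(x,t)$, and Lemma \ref{heat-estimatesI} supplies boundedness, so $Z(\cdot,t) \in C(\Qs)$. Second, for continuity in $t$, I will fix $t_0 > 0$ and use the $x$-independent estimate
\[
\sup_{x \in \Qs} \abs{Z(x,t) - Z(x,t_0)} \le \int_{\Qs} \abs{\exp(-t\norm{\xi}^\alpha) - \exp(-t_0\norm{\xi}^\alpha)}\,d\xi
\]
on a small neighbourhood of $t_0$, to which dominated convergence applies with dominator $2\exp(-t_0\norm{\xi}^\alpha/2)$. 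The main technical point throughout is producing integrable dominators that are uniform in $t$ on compact subintervals of $(0,\infty)$; once the integrability step supplies such a bound, the rest of the argument reduces to standard measure-theoretic bookkeeping.
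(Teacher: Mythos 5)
Your proof is correct, and it takes a genuinely different route to the integrability step than the paper does. The paper computes directly: it decomposes $\Qs\setminus\Zs$ into spheres, writes the integral as $\sum_{n\ge 1} e^{\beta\psi(n)}\exp(-te^{\alpha\psi(n)})\bigl(e^{\psi(n)}-e^{\psi(n-1)}\bigr)$, and compares this sum with the Archimedean integral $\int_0^\infty s^\beta e^{-ts^\alpha}\,ds$, obtaining the explicit quantitative bound $C+\Gamma\bigl(\tfrac{\beta+1}{\alpha}\bigr)/\bigl(\alpha t^{(\beta+1)/\alpha}\bigr)$ --- the same mechanism used in Lemma \ref{heat-estimatesI}, now with the extra weight $\norm{\xi}^\beta$. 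You instead absorb the polynomial weight into half of the exponential via the elementary bound $s^\beta e^{-ts^\alpha/2}\le M_{\alpha,\beta,t}$ and then invoke Lemma \ref{heat-estimatesI} at time $t/2$ as a black box. Your route is shorter and avoids repeating the sphere-sum versus Riemann-sum comparison, at the cost of losing the explicit $t^{-(\beta+1)/\alpha}$ rate; since the proposition and its later use (the smoothing property in Theorem \ref{solution_heatequation}) only require integrability, nothing is lost for the purposes of this paper. For the remaining assertions the paper is terse (the continuity claim is dismissed with an appeal to the mean value theorem), whereas you supply the standard dominated-convergence bookkeeping in full, including the uniform-in-$t$ dominators on compact subintervals of $(0,\infty)$ and the estimate $\sup_x\abs{Z(x,t)-Z(x,t_0)}\le\norm{e^{-t\norm{\cdot}^\alpha}-e^{-t_0\norm{\cdot}^\alpha}}_{L^1}$; this is more detail than the paper gives, and it is all sound.
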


\begin{proof}
The proof is similar to the one in 
Lemma \ref{heat-estimatesI}:
\begin{align*}
\int_{\Qs} \norm{\xi}^\beta \exp( -t\norm{\xi}^{\alpha}) d\xi & = 
\int_{\Zs} \norm{\xi}^\beta \exp( -t\norm{\xi}^{\alpha}) d\xi +\int_{\Qs\backslash \Zs} \norm{\xi}^\beta \exp( -t\norm{\xi}^{\alpha}) d\xi \\ 
&<C+\sum_{n=1}^{\infty} e^{\beta\psi(n)} \exp(-te^{\alpha\psi(n)}) 
\big(e^{\psi(n)}-e^{\psi(n-1)}\big) \\
& \leq C+ \int_0^{\infty} s^\beta e^{-ts^{\alpha}}ds \\
 & \leq C+ \frac{\Gamma(\frac{\beta +1}{\alpha})}{\alpha t^{\frac{\beta+1}{\alpha}}} .
\end{align*}

 Finally, the last assertion follows from the mean value theorem.

\end{proof}

 \subsection[The classical solution of the Heat equation]{The classical solution of the heat equation} 

%Recall the  Abstract Cauchy problem (\ref{HeatEquation})
% \begin{equation*}
% %\label{ParabolicCauchy_problem}
% \begin{cases}
% \frac{\partial u(x,t)}{\partial t} + D^{\alpha}u(x,t)=0, & x\in \Qs, t\geq 0,\\
% u(x,0)=f(x), & f\in \Dom(D^{\alpha}),
% \end{cases}
% \end{equation*}
% where $\alpha>0$, $\Da$ is the pseudodifferential operator with domain $\Dom(\Da)$  defined in Section \ref{pseudodifferential_operators}, and $u:\Qs\times [0,\infty)\To \C$ is an unknown function. 

Given $t> 0$  define the operator $\Tt(t):L^2(\Qs) \To L^2(\Qs)$ by  the convolution with the Heat kernel
\[\Tt(t)f(x) = Z(x,t) * f(x), \qquad(f\in L^2(\Qs)), \]  and let $\Tt(0)$ be the identity operator.
From Proposition \ref{heat-distribution} and Young's inequality   the family of operators $\{\Tt(t)\}_{t \geq 0}$  is a $\Co_0$--semigroup.

Let us show  that the semigroup $\{\Tt(t)\}_{t \geq 0}$ gives the  solution of the heat equation (\ref{HeatEquation}) with initial data $f\in \Phi(\Qs)$. Recall that, for any  $f\in  \Phi(\Qs) \subset \Dom(\Da)$, we have  $\widehat{f}(\xi) \in \Psi(\Qs) \subset \Dom(-m^\alpha)$. Notice that   the abstract cauchy problem (\ref{ACP3}), 
with initial data $\widehat{f}(\xi) \in \Psi(\Qs)$, has a unique solution $\widehat{u}(\xi, t)$, such that $\widehat{u}(\cdot,t)$ belongs to $\Psi(\Qs)$ for any  $t\geq 0$, given by
\begin{equation*}
\widehat{u}(\xi,t) = \widehat{f}(\xi)\exp(-t\norm{\xi}^{\alpha}).
\end{equation*}

Therefore, there is a unique  solution  to the heat equation (\ref{HeatEquation}) with initial condition $f\in \Phi(\Qs)$, such that  $u(\cdot,t)\in \Phi(\Qs)$ for $t\geq 0$, given by
\begin{align*}
u(x,t) &= \F^{-1} \big( \widehat{f}(\xi) \exp(-t\norm{\xi}^{\alpha}) \big)\\
&= \int_{\Qs} \widehat{f}(\xi) \exp(-t\norm{\xi}^{\alpha}) \chi(-x\xi) d\xi \\
& =Z(x,t)*f(x).
\end{align*}

The main theorem of the diffusion equation is the following.

\begin{theorem}\label{maintheorem} 
Let $\alpha>0$ and let $\Ss(t)$ be  the $\Co_0$--semigroup generated by the operator $-\Da$. The operator $\Ss(t)$ coincides for each $t\geq 0$ with the operator $\Tt(t)$ given above. In other words, the  solution of the abstract Cauchy problem (\ref{HeatEquation}) is given by $u(x,t)=Z(x,t) * f(x)$, for $t\geq 0$ and $f \in \Dom(\Da)$. 
\end{theorem}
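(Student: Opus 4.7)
The plan is to exploit the simultaneous diagonalisation of both semigroups by the Fourier transform. By the commutative diagram (\ref{DiagramaConmutativo}) together with the functional calculus for the positive selfadjoint operator $-\Da$, we already know that $\Ss(t)=\F^{-1}\exp(-tm^{\alpha})\F$, where $\exp(-tm^{\alpha})$ denotes the multiplication operator by $\exp(-t\norm{\xi}^{\alpha})$. It therefore suffices to show that $\F\,\Tt(t)\,\F^{-1}$ is the same multiplication operator on $L^{2}(\Qs)$.

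For this, the first observation is that $Z(\cdot,t)$ lies in $L^{1}(\Qs)$ for every $t>0$: Proposition \ref{heat-distribution} yields $\int_{\Qs}Z(x,t)\,dx=1$ with $Z\geq 0$, so $\|Z(\cdot,t)\|_{1}=1$. Young's inequality (already invoked in the text to identify $\{\Tt(t)\}_{t\geq 0}$ as a $\Co_{0}$-semigroup) then gives $\Tt(t)f=Z(\cdot,t)\ast f\in L^{2}(\Qs)$ with $\|\Tt(t)f\|_{2}\leq \|f\|_{2}$ for every $f\in L^{2}(\Qs)$, and the convolution theorem on the locally compact Abelian group $\Qs$ yields
\[
\F[\Tt(t)f](\xi)=\widehat{Z(\cdot,t)}(\xi)\,\widehat{f}(\xi), \qquad \bigl(f\in L^{2}(\Qs)\bigr).
\]
Since $Z(x,t)=\F^{-1}\bigl(\exp(-t\norm{\cdot}^{\alpha})\bigr)(x)$ by the very definition of the heat kernel in Section \ref{heat_kernel}, Fourier inversion gives $\widehat{Z(\cdot,t)}(\xi)=\exp(-t\norm{\xi}^{\alpha})$. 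Combining these identities, $\F\,\Tt(t)\,\F^{-1}=\exp(-tm^{\alpha})=\F\,\Ss(t)\,\F^{-1}$ on $L^{2}(\Qs)$, and since $\F$ is an isometric isomorphism we conclude $\Tt(t)=\Ss(t)$ for every $t>0$; the case $t=0$ is immediate from both definitions.

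Finally, because $\{\Ss(t)\}_{t\geq 0}$ is the $\Co_{0}$-semigroup generated by $-\Da$, the standard semigroup description recalled in Section \ref{semigroup_operators} shows that for each $f\in\Dom(\Da)$ the function $u(x,t)=\Ss(t)f(x)=(Z(\cdot,t)\ast f)(x)$ is the unique classical solution of the abstract Cauchy problem (\ref{HeatEquation}). The only technical point worth spelling out is the convolution identity for an $L^{1}$ function against an $L^{2}$ function on $\Qs$; although this is entirely standard on LCA groups, on $\Qs$ it can also be verified directly by first taking $f\in\D(\Qs)$, where both sides reduce to manipulations on the finite-dimensional pieces $\D^{\ell}_{k}(\Qs)$, and then extending by density using the $L^{2}$-isometry of $\F$.
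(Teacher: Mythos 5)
Your argument is correct, and it rests on the same underlying mechanism as the paper's proof --- both identify $\F\,\Tt(t)\,\F^{-1}$ with the multiplication operator $\exp(-tm^{\alpha})$ --- but the two proofs organize this differently. The paper works on the dense Lizorkin subspace $\Phi(\Qs)$: there it computes $\widehat{u}(\xi,t)=\widehat{f}(\xi)\exp(-t\norm{\xi}^{\alpha})$ explicitly, concludes that $Z(\cdot,t)\ast f$ is the classical solution for $f\in\Phi(\Qs)$, and then extends the identity $\Ss(t)=\Tt(t)$ to all of $L^{2}(\Qs)$ by density of $\Phi(\Qs)$ together with boundedness of both operators. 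You instead prove the operator identity globally on $L^{2}(\Qs)$ in one stroke, via the $L^{1}$--$L^{2}$ convolution theorem and the Fourier inversion $\widehat{Z(\cdot,t)}=\exp(-t\norm{\cdot}^{\alpha})$ (legitimate since $\exp(-t\norm{\cdot}^{\alpha})\in L^{1}\cap L^{2}$ by Lemma \ref{heat-estimatesI} and $Z(\cdot,t)\in L^{1}$ by Proposition \ref{heat-distribution}), and only afterwards invoke the semigroup theory to read off the classical-solution statement for $f\in\Dom(\Da)$. Your route has the advantage of making explicit two points the paper leaves implicit: that $\Tt(t)$ is bounded (Young), so that a density argument would even be meaningful, and that the agreement of $\Ss(t)$ and $\Tt(t)$ is an exact operator identity rather than an identity of classical solutions requiring a separate uniqueness claim. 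The paper's route, on the other hand, produces along the way the concrete statement that $u(\cdot,t)$ stays in $\Phi(\Qs)$ for initial data in $\Phi(\Qs)$, which your argument does not record. Both are complete; no gap.
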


\begin{proof}
As has been said before, for $f\in \Phi(\Qs)$,  $u(x,t)=Z(x,t) * f(x)$ is   a classical solution to the heat equation (\ref{HeatEquation}). Also, $u(x,t)\in \Phi(\Qs)\subset \Dom(D^{\alpha})$ for any $t\geq 0$. Since $\Phi(\Qs)$ is dense in $L^2(\Qs)$, the operator $\Ss(t)=\Tt(t)$ for each $t\geq 0$ and the function $u(x,t)=Z(x,t) * f(x)$ is a solution of the Cauchy problem for any $f\in \Dom(\Da)$.
\end{proof}

\section[The Markov process on $\Qs$]{The Markov process on $\Qs$}
\label{markov_process}

In this section the fundamental solution of the heat equation  $Z(x,t)$  is shown to be the transition density function of a Markov process  on $\Qs$ (see \cite{Dyn} and also \cite{Tai}). 

Let $\alpha$ be a positive real number and denote by $\B$ the Borel $\sigma$--algebra of $\Qs$. Define 
\[ p(t,x,y) := Z(x-y,t) \quad (t>0, x,y\in \Qs) \]
and
$$ P(t,x,y) =
\begin{cases}
\int_B p(t,x,y) dy & \text{if}\; t>0, x\in \Qs, B\in \B,\\
\Delta_B(x) & \text{if}\; t=0.
\end{cases}
$$

From Theorem \ref{heat-distribution},  it  follows that  $p(t,x,y)$ is a normal transition density and  $P(t,x,B)$ is a normal transition function on $\Qs$ (see \cite{Dyn}, Section 2.1 for further details).

\begin{lemma}
\label{condition-L-M}
The transition function $P(t,y,B)$ satisfies the following two conditions:
\begin{enumerate}[a.]
\item For each $s\geq 0$ and any compact subset $B$ of $\Qs$
\[ \lim_{x\to \infty} \sup_{t\leq s} P(t,x,B) = 0 \quad  (\text{Condition}\; L(B)). \]

\item For each $\epsilon>0$ and any compact subset $B$ of $\Qs$
\[ \lim_{t\to 0+} \sup_{x\in B} P(t,x,\Qs\setminus B) = 0 \quad  (\text{Condition}\;  M(B)). \]
\end{enumerate}
\end{lemma}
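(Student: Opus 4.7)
The plan is to deduce both conditions from the heat kernel estimate of Proposition \ref{heat-estimates}, namely $Z(u,t)\leq Ct(t^{1/\alpha}+\norm{u})^{-\alpha-1}$, together with the strong triangle inequality of the ultrametric $d$ and the elementary fact that $\norm{u}^{-\alpha-1}$ is integrable on $\{\norm{u}\geq \delta\}$ for every $\delta>0$.

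For Condition $L(B)$, I first use compactness of $B$ to enclose it inside a ball $B_N=e^{-\psi(N)}\Zs$. Once $\norm{x}>e^{\psi(N)}$, the strong triangle inequality forces $\norm{x-y}=\norm{x}$ for every $y\in B$; applying Proposition \ref{heat-estimates} and the trivial bound $t^{1/\alpha}+\norm{x}\geq\norm{x}$ gives
\[
P(t,x,B)=\int_B Z(x-y,t)\,dy \;\leq\; C\,\mu(B)\,t\,\norm{x}^{-\alpha-1}.
\]
Since $t\leq s$, the supremum in $t$ is at most $Cs\,\mu(B)\,\norm{x}^{-\alpha-1}$, which tends to zero as $\norm{x}\to\infty$.

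For Condition $M(B)$, I will rely on the clopen character of balls in the ultrametric topology of $\Qs$: any compact $B$ can be enlarged to a clopen compact set (a finite union of balls), and for such $B$ the complement $\Qs\setminus B$ is closed. Then $B$ compact and $\Qs\setminus B$ closed are disjoint in the metric space $(\Qs,d)$, whence $\delta:=d(B,\Qs\setminus B)>0$. For every $x\in B$ and $y\in \Qs\setminus B$ one has $\norm{x-y}\geq \delta$, and therefore, invoking Proposition \ref{heat-estimates} once more,
\[
P(t,x,\Qs\setminus B)\;\leq\;\int_{\norm{u}\geq \delta} Z(u,t)\,du \;\leq\; C\,t\int_{\norm{u}\geq \delta}\norm{u}^{-\alpha-1}\,du.
\]
The integral on the right is a finite constant depending only on $\delta$ and $\alpha$, independent of $x\in B$; letting $t\to 0^{+}$ yields the required uniform limit.

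The main delicate point is the handling of compact $B$ in Condition $M(B)$ when $B$ is not already open: a topological boundary point $x\in B$ would satisfy $d(x,\Qs\setminus B)=0$, invalidating the separation argument. The totally disconnected, ultrametric nature of $\Qs$ removes this obstacle, since the clopen compact subsets (finite unions of balls) form a neighborhood basis and are the natural sets on which the Dynkin conditions are applied in \cite{Dyn}.
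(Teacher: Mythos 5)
Your proof is correct and follows essentially the same route as the paper: both conditions are read off from the heat kernel bounds of Lemma \ref{heat-estimatesII} and Proposition \ref{heat-estimates}, with $L(B)$ coming from $Z(u,t)\leq Ct\norm{u}^{-\alpha-1}$ together with the divergence of $\norm{x-y}$ for $y$ in a fixed ball, and $M(B)$ from the integrability of $\norm{u}^{-\alpha-1}$ away from the origin. The one point where you go beyond the paper is the separation issue in $M(B)$: the paper simply asserts $\norm{x-y}>\epsilon$ for $y\in\Qs\setminus B$ (quietly using the $\epsilon$ that never appears in the displayed condition, i.e.\ in effect proving Dynkin's condition for the complement of an $\epsilon$--neighbourhood), whereas you correctly flag that the literal statement fails for non--clopen compacta --- for $B$ a singleton one has $P(t,x,\Qs\setminus B)\equiv 1$ since points have Haar measure zero --- and you restrict to clopen compact $B$, which is what the separation $d(B,\Qs\setminus B)>0$ and Dynkin's theorem actually require. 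Only be aware that ``enlarging'' $B$ to a clopen set does not by itself control $P(t,x,\Qs\setminus B)$ for the original $B$, since replacing $B$ by a larger set only shrinks the complement and hence gives an inequality in the wrong direction; the statement must be read as being about clopen compacta (or about $\epsilon$--neighbourhoods) from the outset, exactly as your closing remark suggests.
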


\begin{proof}
Since $B$ is compact, $d(x) := dist(x,B)\To \infty$ as $x\to \infty$. From Lemma 
\ref{heat-estimatesII} it follows $Z(x-y,t)\leq s\norm{d(x)}^{-\alpha-1}$ for any $y\in B$ and $t\leq s$. Hence 
\[ P(t,x,B)\leq s\norm{d(x)}^{-\alpha-1} \mu(B)\To 0 \] 
as $x\to \infty$. This implies Condition $L(B)$.

To verify Condition $M(B)$ we proceed as follows: for $y\in \Qs\setminus B$ we get $\norm{x-y}>\epsilon$. The statement follows from Proposition \ref{heat-estimates}:
$$
P(t,x,\Qs\setminus B)\leq Ct \To 0,\;  t\to 0^{+}.
$$
\end{proof}

\begin{theorem} 
The heat kernel $Z(x,t)$ is the transition density of a time and space homogeneous Markov process which is bounded, right--continuous and has no discontinuities other than jumps.
\end{theorem}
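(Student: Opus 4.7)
The plan is to assemble the pieces already in place and invoke a standard construction theorem from Dynkin's monograph (Chapter III of \cite{Dyn}), which asserts that a normal transition function on a locally compact second countable Hausdorff space satisfying conditions $L(B)$ and $M(B)$ for every compact $B$ arises from a strong Markov process with the regularity properties claimed in the statement. Since $\Qs$ is second countable, locally compact and Hausdorff (in fact ultrametric), the hypothesis on the underlying state space is met, so the proof reduces to verifying the hypotheses on $P(t,x,B)$ and the homogeneity properties.

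First I would recall that, by Proposition \ref{heat-distribution}, $Z(\cdot,t)$ is a probability density on $\Qs$ for every $t>0$, that $Z(\cdot,t) \to \delta_0$ as $t\to 0^+$, and that $Z(\cdot,\cdot)$ satisfies the convolution (Chapman--Kolmogorov) identity $Z(x,t+s)=\int_{\Qs} Z(x-y,t)Z(y,s)\,dy$. Combined with the continuity of $Z$ in $t$ shown in Proposition \ref{convergence}, these facts imply that $p(t,x,y)=Z(x-y,t)$ is a normal transition density and $P(t,x,B)=\int_B p(t,x,y)\,dy$ is a normal transition function on $(\Qs,\B)$. The translation-invariant form $Z(x-y,t)$ immediately gives spatial homogeneity, while the dependence of $Z(x,t)$ on $t$ alone (not on an initial time) gives temporal homogeneity.

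Next I would point to Lemma \ref{condition-L-M}, which establishes that $P(t,x,B)$ satisfies conditions $L(B)$ and $M(B)$ for every compact $B\subset\Qs$. At this point every hypothesis of Dynkin's construction theorem is in force, so there exists a time- and space-homogeneous Markov process $\{X_t\}_{t\geq 0}$ on $\Qs$ whose transition function is precisely $P(t,x,B)$ and whose sample paths are bounded on compact time intervals, right-continuous, and without discontinuities other than jumps. By construction, $Z(x-y,t)$ is the transition density of this process, which is exactly the assertion of the theorem.

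The only genuinely substantive step is the verification of conditions $L$ and $M$, but this was already carried out in Lemma \ref{condition-L-M} using the pointwise bound $Z(x,t)\leq Ct(t^{1/\alpha}+\norm{x})^{-\alpha-1}$ of Proposition \ref{heat-estimates}. Therefore no new obstacle arises at this stage; the remaining work is simply to cite the appropriate statement in \cite{Dyn} and assemble the homogeneity and regularity conclusions.
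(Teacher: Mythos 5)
Your proposal is correct and follows essentially the same route as the paper: the paper's proof likewise reduces the statement to Lemma \ref{condition-L-M} (conditions $L(B)$ and $M(B)$) together with the normality of the transition function from Proposition \ref{heat-distribution}, and then cites Dynkin's construction theorem (Theorem 3.6 in \cite{Dyn}) on the second countable, locally compact ultrametric space $\Qs$. Your version merely spells out the homogeneity and normality checks that the paper leaves implicit.
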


\begin{proof}
The result follows from Lemma \ref{condition-L-M}  and the fact that $\Qs$ is a second countable and locally compact  ultrametric space (see \cite{Dyn},Theorem 3.6). 
\end{proof}

\section[Cauchy problem for parabolic type equations on $\Qs$]{Cauchy problem for parabolic type equations on $\Qs$}
\label{Cauchy_problem}

In this section two other classical formulations of the Cauchy problem on $\Qs$ are described. 

\subsection[Homogeneous equation with values in $L^2$]{Homogeneous equation with values in $L^2$} 

Recall that a $C_0$ semigroup $\exp(tL)$, with infinitesimal generator $L$, is smooth if for any $t>0$ and any $f \in L^2(\Qs)$, the element $\exp(tL)f$ of $L^2(\Qs)$ belongs to the domain of $L$.

\begin{theorem}
\label{solution_heatequation} 
The $C_0$ semigroup $S(t)$ is smoothing. In other words, if $f$ is any square integrable function on $\Qs$,  the Cauchy problem
\begin{equation*}
\begin{cases}
\frac{{\partial}u(x,t)}{{\partial t}} + \Da u(x,t) = 0,  \ x\in \Qs, \ t> 0,  \\
u(x,0) = f(x),
\end{cases}
\end{equation*} 
has a unique solution
\[ u(x,t) = \int_{\Qs} f(x - y) Z(y,t) dy. \]
\end{theorem}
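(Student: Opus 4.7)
The plan is to exploit the Fourier description of the semigroup together with the rapid decay of the multiplier $\exp(-t\norm{\xi}^{\alpha})$ for $t>0$, so that no regularity on $f$ is required beyond $f \in L^{2}(\Qs)$.

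First I would verify the smoothing property of $\Ss(t)$ by working on the Fourier side. From the commutative diagram (\ref{DiagramaConmutativo}), for any $f \in L^{2}(\Qs)$ one has $\widehat{\Ss(t)f}(\xi) = \exp(-t\norm{\xi}^{\alpha})\,\widehat{f}(\xi)$. To conclude that $\Ss(t)f \in \Dom(\Da)$, I need $\norm{\xi}^{\alpha}\widehat{\Ss(t)f}(\xi) \in L^{2}(\Qs)$. The one--variable elementary bound $s e^{-ts} \leq 1/(et)$ for $s\geq 0$ gives $\norm{\xi}^{\alpha} \exp(-t\norm{\xi}^{\alpha}) \leq 1/(et)$ uniformly in $\xi \in \Qs$, so multiplying a bounded function by $\widehat{f}\in L^{2}$ stays in $L^{2}$. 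Iterating this argument with $\norm{\xi}^{k\alpha}\exp(-t\norm{\xi}^{\alpha})$ shows in fact $\Ss(t)f \in \bigcap_{k\geq 1}\Dom(D^{k\alpha})$, which is the smoothing statement.

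Second, I would identify $\Ss(t)f$ with the convolution $Z(\cdot,t)\ast f$ for arbitrary $f \in L^{2}(\Qs)$. By Proposition \ref{heat-distribution} and Young's inequality, $\Tt(t)g = Z(\cdot,t)\ast g$ defines a bounded operator on $L^{2}(\Qs)$, and by Theorem \ref{maintheorem} the two operators $\Ss(t)$ and $\Tt(t)$ agree on the dense subspace $\Phi(\Qs)$; hence they coincide on the whole of $L^{2}(\Qs)$. Combined with the smoothing just proved, the function
\[
u(x,t) = \int_{\Qs} f(x-y)\,Z(y,t)\,dy = \Ss(t)f(x)
\]
belongs to $\Dom(\Da)$ for every $t>0$, and the standard semigroup calculus recalled in Section \ref{semigroup_operators} yields $\tfrac{d}{dt}\Ss(t)f = -\Da\Ss(t)f$ in $L^{2}(\Qs)$ for $t>0$, while strong continuity gives $\Ss(0)f=f$. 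Thus $u$ solves the Cauchy problem.

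Finally, uniqueness follows by transferring the problem to the Fourier side: if $v$ is another classical $L^{2}$--solution with $v(\cdot,0)=f$, then $\widehat{v}(\xi,t)$ satisfies, as an $L^{2}$--valued equation,
\[
\widehat{v}_{t}(\xi,t) + \norm{\xi}^{\alpha}\widehat{v}(\xi,t) = 0,\qquad \widehat{v}(\xi,0)=\widehat{f}(\xi),
\]
whose only solution is $\widehat{f}(\xi)\exp(-t\norm{\xi}^{\alpha})=\widehat{\Ss(t)f}(\xi)$, and injectivity of $\F$ forces $v = \Ss(t)f$. The main delicate point in the argument is the smoothing step; once the uniform bound on $\norm{\xi}^{\alpha}\exp(-t\norm{\xi}^{\alpha})$ is observed, everything else reduces to previously established results.
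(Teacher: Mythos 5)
Your proof is correct and follows essentially the same route as the paper: pass to the Fourier side, observe that the multiplier semigroup $f(\xi)\mapsto e^{-t\norm{\xi}^{\alpha}}f(\xi)$ is smoothing, and transfer back by unitarity of $\F$. Your uniform bound $\norm{\xi}^{\alpha}e^{-t\norm{\xi}^{\alpha}}\leq 1/(et)$ is in fact the cleaner justification of the smoothing step --- the paper instead cites Proposition \ref{convergence} (integrability of the multiplier), whereas boundedness is what is actually needed to keep $\norm{\xi}^{\alpha}\widehat{f}e^{-t\norm{\xi}^{\alpha}}$ in $L^{2}$ --- and your convolution identification and uniqueness arguments merely fill in details the paper leaves implicit.
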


\begin{proof} 
From Proposition \ref{convergence}, it follows that the $C_0$ semigroup given by
$$ f(\xi)\mTo  \exp(t\norm{\xi}^\alpha)f(\xi) \qquad(t \geq 0),$$ 
is smooth. Since the Fourier transform is unitary, the  $C_0$ semigroup $\Ss(t)$ is also smooth.
\end{proof}

\subsection[Non--homogeneous equations]{Non--homogeneous equations}

Consider the following Cauchy problem
$$
\begin{cases}
\frac{\partial u(x,t)}{\partial t} + \Da u(x,t) = f(x,t), & x\in \Qs, t\in [0,T], T>0,\\
u(x,0)=u_0(x) & u_0\in \Dom(\Da).
\end{cases}
$$

The Duhamel's Principle states:

\begin{theorem} 
Let $\alpha>0$ and let $f(x,\cdot)\in \Co([0,T],L^2(\Qs))$. Assume that at least one of the following conditions is satisfied:

\begin{enumerate}
\item $f(x,\cdot) \in L^1((0,T),\Dom(\Da))$;
\item $f(x,\cdot) \in W^{1,1}((0,T),L^2(\Qs))$.
\end{enumerate}

Then the Cauchy problem has a unique solution given by
\[ 
u(x,t) = \int_{\Qs} Z(x-y,t) u_0(y) dy + 
\int_0^t\left\{ \int_{\Qs} Z(x-y,t-\tau)f(y,\tau) dy \right\}d\tau, 
\]
where $d\tau$ is the Lebesgue measure.
\end{theorem}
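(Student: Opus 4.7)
The plan is to reduce the statement to the standard Duhamel formula from the theory of $\Co_0$--semigroups on Banach spaces, applied to the semigroup $\Ss(t)=\exp(-t\Da)$ generated by $-\Da$ on $L^2(\Qs)$. By Theorem \ref{maintheorem}, $\Ss(t)g = Z(\cdot,t)*g$ for every $g\in L^2(\Qs)$, so the formula in the statement is precisely
\[
u(\cdot,t) = \Ss(t)u_0 + \int_0^t \Ss(t-\tau) f(\cdot,\tau)\, d\tau,
\]
the so--called mild solution. It therefore suffices to verify that, under either hypothesis (1) or (2), this mild solution is in fact a classical solution and is the unique such solution.

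First I would split $u = u_1 + u_2$ by linearity. The homogeneous part $u_1(x,t) = (\Ss(t)u_0)(x) = \int_{\Qs} Z(x-y,t)u_0(y)\,dy$ is a classical solution of the homogeneous Cauchy problem with initial data $u_0\in \Dom(\Da)$: this is exactly Theorem \ref{maintheorem}, together with the semigroup properties listed in Section \ref{semigroup_operators} (strong continuity, invariance of $\Dom(\Da)$, and differentiability of $t\mapsto \Ss(t)u_0$ with derivative $-\Da \Ss(t)u_0$). Consequently the first integral contributes the initial condition and carries no source term.

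The core of the argument is therefore to show that
\[
u_2(\cdot,t) := \int_0^t \Ss(t-\tau) f(\cdot,\tau)\,d\tau
\]
belongs to $\Dom(\Da)$ for each $t\in[0,T]$, is continuously differentiable in $L^2(\Qs)$, and satisfies $\partial_t u_2 + \Da u_2 = f$ with $u_2(\cdot,0)=0$. Under hypothesis (1), $f(\cdot,\tau)\in \Dom(\Da)$ for a.e.\ $\tau\in (0,T)$ with $\int_0^T \|\Da f(\cdot,\tau)\|\,d\tau <\infty$, so $\Da$ can be moved inside the integral using the closedness of $\Da$; differentiating $u_2$ directly and invoking the fundamental theorem of calculus gives $\partial_t u_2 = f - \Ss(t-\cdot)\Da f\!\mid_{\tau=0}^{\tau=t}$ after rearrangement, yielding the equation. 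Under hypothesis (2), $f(\cdot,\cdot)\in W^{1,1}((0,T),L^2(\Qs))$, and I would integrate by parts in $\tau$: writing
\[
u_2(\cdot,t) = \int_0^t \Ss(\sigma)f(\cdot,t-\sigma)\,d\sigma
\]
via the substitution $\sigma = t-\tau$ makes the $t$--dependence sit inside $f$, and differentiation together with the identity $\Ss(\sigma)\partial_\tau f = \partial_\tau(\Ss(\sigma)f)$ allows one to recover $\Da u_2 = f - \partial_t u_2$. Both computations are the classical Pazy arguments (cf.\ \cite{Paz}) and transfer verbatim to our setting because $-\Da$ is a closed, densely defined generator of a contraction $\Co_0$--semigroup.

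Uniqueness follows from the uniqueness of the classical solution of the homogeneous Cauchy problem: if $v$ and $\tilde v$ are two classical solutions, their difference $w$ solves the homogeneous problem with zero initial data, so $\tfrac{d}{dt}\|w(\cdot,t)\|_{L^2}^2 = -2\langle \Da w, w\rangle \leq 0$ because $-\Da$ is a negative selfadjoint operator (Section \ref{pseudodifferential_operators}); combined with $w(\cdot,0)=0$ this forces $w\equiv 0$. The main obstacle is the interchange of $\Da$ (an unbounded operator) with the $\tau$--integral and the verification of the $C^1$ regularity of $u_2$ in $t$; this is exactly where the two alternative regularity assumptions on $f$ become essential, and each has to be handled by the corresponding standard trick (moving $\Da$ inside under (1), shifting the $t$--dependence to $f$ under (2)).
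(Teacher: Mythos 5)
The paper states this theorem without any proof, implicitly appealing to the standard Duhamel theory for $\Co_0$--semigroups (the references \cite{Paz} and \cite{EN} cited at the start of Section \ref{heat_equation}); your proposal is precisely that standard argument, correctly adapted to the generator $-\Da$, so it matches the paper's intended route and fills the gap the authors left. The split into the homogeneous part (handled by Theorem \ref{maintheorem} and the listed semigroup properties), the two regularity cases for the inhomogeneous term (moving the closed operator $\Da$ inside the integral under hypothesis (1); shifting the $t$--dependence onto $f$ via $\sigma=t-\tau$ under hypothesis (2)), and the energy-method uniqueness using positivity of $\Da$ are all sound. One small slip: under hypothesis (1) the displayed identity $\partial_t u_2 = f - \Ss(t-\cdot)\Da f\mid_{\tau=0}^{\tau=t}$ is garbled; the correct outcome of that computation is
\[
\partial_t u_2(\cdot,t) = f(\cdot,t) - \int_0^t \Ss(t-\tau)\,\Da f(\cdot,\tau)\,d\tau = f(\cdot,t) - \Da u_2(\cdot,t),
\]
where the first term comes from the fundamental theorem of calculus (using the assumed continuity of $f$ in $t$) and the second from differentiating $\Ss(t-\tau)$ in $t$. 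This is a presentational flaw, not a gap in the idea.
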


\section[Final remarks]{Final remarks}
\label{final_remarks}

The theory developed in this article can be applied to more general
Abelian topological groups. More explicitly, let $G$ be a selfdual, second countable, totally disconnected, locally compact, Abelian topological group with a filtration by compact and open subgroups $\{H_n\}_{n\in \Z}$,
\[ \{0\}\subset \cdots \subset H_{-n} \subset \cdots \subset H_0 \subset \cdots \subset H_n \subset \cdots \subset G, \]
such that:

\begin{enumerate}
\item  $H_0=H$ is a fixed compact and open subgroup of $G$, 
 \item  the index of the quotients $H_{n+1}/H_n$ is uniformly bounded from above,
 
 \item  the annihilator, $\mathrm{Ann}_G(H_n)$,  satisfies 
\[ \mathrm{Ann}_G(H_n) = H_{-n}, \] 
for all $n\in \Z$, and
\item the following relations are  satisfied: 
\begin{equation*}
\label{union-interseccionQl}
\bigcap_{n\in \Z} H_n = \{0\}\quad \text{ and } \quad \bigcup_{n\in \Z} H_n = G.
\end{equation*}

 \end{enumerate}

Notice that, since $G$ is autodual, there exist an isomorfism of $G$ with its Pontryagin dual group $\widehat{G}$, i.e. 
a function $\xi \mapsto \chi_{\xi}$ which identifies $G$ and $\widehat{G}$ as topological groups.  This identification makes  expression $\mathrm{Ann}_{G}(H_n) = H_{-n}$, in property $(3)$ above, meaningful. 

Normalize the Haar measure $\mu$ on $G$ in such a way that $\mu(H)=1$. This implies that the measure of any other subgroup $H_n$ of $G$ is given either by the index $[H:H_n]$ or by the index $[H_n:H]$. The group $G$ has a unique  $G$-invariant  ultrametric $d_G$ such that the balls centred at zero coincide with the elements of the filtration $\{H_n\}_{n\in \Z}$ and the radius of any ball equals to its Haar measure.  Let $\lambda_n$ be the radius and Haar measure of $H_n$.

The topological and algebraic properties  of $G$ are expressed by the projective and inductive limits:
$$
H_0=\varprojlim _{ n \leq 0} H_0/H_{n} \qquad G=  \varinjlim_{ n \geq 0}  H_n.
$$ To the ultrametric spaces, $(H,d_H)$ and  $(G,d)$, there corresponds, respectively,  a tree $\mathcal{T}(H)$ with finite ramification index, and a tree $\mathcal{T}(G)$ whose endspace is identified with 
$G$. Consequently, $L^2(G)$ is a separable Hilbert space (see \cite{CE1}, \cite{KK1} and  \cite{KK2}).

In addition, the topology of the locally constant functions  of compact support $\D(\Qs)$ is expressed by the inductive limits
  $$
  \D^{\ell}(G) = \varinjlim_{k} \D^{\ell}_k(G) \quad \text{ and } \quad  
  \D(G) =\varinjlim_{\ell} \D^{\ell}(G) 
  $$ and $\D(\Qs)$ is a locally convex complete topological algebra and a nuclear space.
  
We have the Fourier transform $
\F(f)(\xi)=\int_G f(x)\chi_{\xi}(x) d\mu(x)$ and due to   equality $\Ann_{\Qs} (H_n)=H_{-n}$, it 
satisfies  $ \F: \D^{\ell}_k(\Qs)\To \D^{-k}_{-\ell}(\Qs)$  and  gives an isomorfism $\D(\Qs)\cong \D(\Qs)$ of locally  convex topological linear space. Furthermore, the Fourier transform $ \F: L^2(G) \To L^2(G)$
is an isometry of Hilbert spaces.

For any $\alpha >  0$, the  function $\norm{\cdot}_{G}^{\alpha}$ related to the ultrametric $d_G$ defines the pseudodifferential operator
$D^\alpha: \Dom(D^\alpha) \subset L^2(G) \To L^2(G)$ given by 
$$ D^\alpha(f)= \F^{-1}_{\xi \to x} \big(\norm{\xi}_{G} ^\alpha \F_{x \to \xi}(f) \big)  \qquad (f \in \Dom(D^\alpha) ).$$
The operator $-D^\alpha$ is a positive selfadjoint unbounded operator with spectrum $\{ 0 \} \cup \{ \lambda_n^\alpha \}_{n \in \Z}   $.
The heat kernel $Z(x,t)= \F^{-1}_{\xi \to x}\big(\exp(-t \norm{\xi}^\alpha)\big)$ is a well defined positive function, given by a  formula similar to the one  in Proposition  \ref{freeHeatKernel} and  satisfies the estimate of Proposition \ref{heat-estimates}. 

As a consequence of all that have been said, the following result holds.

\begin{theorem}\label{generalization}
  If $f$ belongs to  $\Dom(-\Da) \subset L^2(G)$, the Cauchy problem
\begin{equation*}
\begin{cases}
\frac{{\partial}u(x,t)}{{\partial t}} + \Da u(x,t) = 0,  \ x \in \Qs, \ t \geq 0,  \\
u(x,0) = f(x),
\end{cases}
\end{equation*} 
has a classical solution $u(x,t)$ determined by the convolution of $f$ with the heat kernel $Z(x,t)$. In addition, $Z(x,t)$ is the transition density of a time and space homogeneous Markov process which is bounded, right--continuous and has no discontinuities other than jumps.
\end{theorem}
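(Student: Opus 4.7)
The plan is to transfer the proof of Theorem \ref{maintheorem} verbatim to the abstract group $G$ by systematically replacing the filtration $\{e^{\psi(n)}\Zs\}_{n\in\Z}$ and the radii $e^{\psi(n)}$ with the filtration $\{H_n\}_{n\in\Z}$ and the radii $\lambda_n = \mu(H_n)$. The four listed hypotheses on $G$ are precisely what is needed for each step in the chain of lemmas from Section \ref{heat_equation} and Section \ref{markov_process} to go through in this generality.

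First I would set up the Fourier-analytic machinery. Selfduality combined with hypothesis $(3)$ yields the analogues of the two lemmas preceding Proposition \ref{freeHeatKernel}: for $\xi \in G$,
\[
\int_{H_n} \chi_\xi(x)\,d\mu(x) = \begin{cases} \lambda_n & \text{if } \norm{\xi}_G \leq \lambda_n^{-1}, \\ 0 & \text{otherwise,}\end{cases}
\]
with the corresponding spherical formula obtained by subtraction. This makes $\F$ an isomorphism from $\D^{\ell}_k(G)$ onto $\D^{-k}_{-\ell}(G)$ and, via the commutative diagram (\ref{DiagramaConmutativo}), identifies $-\Da$ (with symbol $\norm{\xi}_G^\alpha$) as a positive selfadjoint unbounded operator on $L^2(G)$ with point spectrum $\{\lambda_n^\alpha\}_{n\in\Z}$ and $0$ as a spectral limit point. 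Telescoping as in Proposition \ref{freeHeatKernel} then yields the explicit formula
\[
Z(x,t) = \sum_{\substack{n\in\Z \\ \lambda_n \leq \norm{x}_G^{-1}}} \lambda_n \bigl\{\exp(-t\lambda_n^\alpha) - \exp(-t\lambda_{n+1}^\alpha)\bigr\},
\]
which is manifestly nonnegative.

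Second, I would establish the two heat kernel estimates. Comparing $\sum_n \exp(-t\lambda_n^\alpha)(\lambda_n-\lambda_{n-1})$ with $\int_0^\infty \exp(-ts^\alpha)\,ds$ gives the analogue of Lemma \ref{heat-estimatesI}, namely $\abs{Z(x,t)} \leq Ct^{-1/\alpha}$. The analogue of Lemma \ref{heat-estimatesII}, $Z(x,t) \leq C\,t\,\norm{x}_G^{-\alpha-1}$, is the delicate step: the same bound $1-e^{-s}\leq s$ reduces it to controlling the residual factor $\lambda_{n+1}/\lambda_n = [H_{n+1}:H_n]$, and this is exactly where hypothesis $(2)$ on the uniform boundedness of the indices is needed. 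Combining both bounds gives the global estimate of Proposition \ref{heat-estimates}. Once this is in hand, Proposition \ref{heat-distribution} transfers verbatim: Fourier inversion of $\exp(-t\norm{\xi}_G^\alpha)$ at $0$ gives $\int_G Z(x,t)\,d\mu = 1$, convergence to Dirac as $t\to 0^+$ follows from dominated convergence on $\D(G)$, and the semigroup identity comes from factoring the exponential.

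Third, I would run the semigroup argument of Theorem \ref{maintheorem} unchanged. Young's inequality makes $\Tt(t)f := Z(\cdot,t)*f$ a $\Co_0$-contraction semigroup on $L^2(G)$; on the dense subspace $\F^{-1}(\Psi(G))$, $\Tt(t)f$ lies in $\Dom(\Da)$ for every $t\geq 0$ and coincides with the unique classical solution $\F^{-1}\bigl(\widehat f(\xi)\exp(-t\norm{\xi}_G^\alpha)\bigr)$ of the transformed Cauchy problem, so $\Tt(t)$ agrees with the Hille--Yoshida semigroup $\Ss(t) = \exp(-t\Da)$ on a dense subspace and hence everywhere. The Markov-process conclusion then follows by verifying Dynkin's conditions $L(B)$ and $M(B)$ exactly as in Lemma \ref{condition-L-M} (both reduce to the two heat kernel estimates above) and invoking Dynkin's theorem, which applies since $G$ is second countable and locally compact.

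The main obstacle is the $\norm{x}_G^{-\alpha-1}$ estimate in step two. In the concrete case the constant $\max_{p\in S}\{p^\alpha\}$ came from the ramification indices $[B_n:B_{n-1}]$; in the abstract setting its replacement $\sup_n [H_{n+1}:H_n]^\alpha$ is finite \emph{only} because of hypothesis $(2)$. Without uniform boundedness of the indices, this inequality collapses and with it Dynkin's condition $M(B)$, so although every other step of the proof is a mechanical transfer of the concrete argument, this particular estimate is what forces the hypothesis and merits careful verification.
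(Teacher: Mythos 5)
Your proposal is correct and follows essentially the same route as the paper, whose own justification of Theorem \ref{generalization} is simply the preceding discussion in Section \ref{final_remarks} transferring the chain of lemmas from $\Qs$ to $G$ with $e^{\psi(n)}$ replaced by $\lambda_n$. Your identification of hypothesis $(2)$ as the point where the estimate $Z(x,t)\leq Ct\norm{x}_G^{-\alpha-1}$ could fail matches the paper's own Remark \ref{ramification_index} and its closing observation that the finite ad\`ele ring is excluded precisely because no filtration satisfies $(2)$.
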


Examples of these groups are detailed as follows:
\begin{enumerate}
\item For any fixed positive integer number $n$, let $G$ be the set of $n$--adic numbers $\Q_n$ with $H_0=\Z_n$ the maximal compact and open subring. The filtration is given by 
$H_{\ell} = n^{\ell} \Z_n$, for $\ell\in \Z$. 

\item For any prime number $p$ and a positive integer number $n$, let $G=(\Qp^n,+)$ with  
$H_0=\Zp^n$ the maximal compact and open subring. The filtration is given by 
$H_{\ell} = p^{\ell} \Z_p^n$, for $\ell\in \Z$. The metric here induced the Taibleson operator on $\Qp^n$ with a normalizing factor.

\item Fix any prime number $p$ and set $G$ the restricted direct product of a countable copies of the fixed field $\Qp$ with respect to $\Zp$. That is, $G=\prod'_{n\geq 1} \Qp$ and $H_0=\prod_{n\geq 1} \Zp$. The members of the filtration can be given as:
\begin{enumerate}[$\bullet$]
\item $H_0 = \prod_{n\geq 1} \Zp$,
\item $H_{\pm 1} = p^{\mp 1}\Zp \times \Zp\times \cdots$,
\item $H_{\pm 2} = p^{\mp 1}\Zp \times p^{\mp 1}\Zp \times \Zp\times \cdots$,
\item $H_{\pm 3} = p^{\mp 2}\Zp \times p^{\mp 1}\Zp \times \Zp\times \cdots$  
\item $H_{\pm 4} = p^{\mp 2}\Zp \times p^{\mp 1}\Zp \times p^{\mp 1}\Zp \times \Zp\times \cdots$  
\end{enumerate}

\item Finite products and restricted direct products of the examples above provides a large class of groups satisfying the requirements.
\end{enumerate}

Last but not least,  the finite \`adele ring $\Af$ is a selfdual, second contable, and  totally disconnected topological ring where one can take $H_0=\prod_{p \in \Pp} \Zp$. However, even though there are many  filtrations  $\{H_n\}_{n\in \Z}$ of $\Af$, satisfying properties (1),(3),(4), any filtration satisfying those properties does not 
satisfies property (2) (see \cite{CE1}).

\end{document}